\tikzstyle{every picture}+=[remember picture,inner xsep=0,inner ysep=0.25ex]
\DeclareFontFamily{U}{mathx}{\hyphenchar\font45 }
\DeclareFontShape{U}{mathx}{m}{n}{
	<5><6><7><8><9><10><10.95><12><14.4><17.28><20.74><24.88> mathx10
}{}
\DeclareSymbolFont{mathx}{U}{mathx}{m}{n}
\DeclareMathAccent{\widecheck}{0}{mathx}{"71}
\newcommand{\block}[1]{
	\underbrace{\begin{matrix} \mathbf{0}_{p,1} & \cdots & \mathbf{0}_{p,1}\end{matrix}}_{#1}
}
\def\VR{\kern-\arraycolsep\strut\vrule &\kern-\arraycolsep}
\def\vr{\kern-\arraycolsep & \kern-\arraycolsep}
\newcommand*{\sublabel}[1]{%
	\let\old@currentlabel\@currentlabel%
	\renewcommand{\@currentlabel}{\theenumii}%
	\label{#1}%
	\let\@currentlabel\old@currentlabel%
}
\DeclareMathOperator{\Rank}{rank}
\DeclareMathOperator{\Ker}{Ker}
\DeclareMathOperator{\mult}{mult}
\def\widebreve{\mathpalette\wide@breve}
\def\wide@breve#1#2{\sbox\z@{$#1#2$}%
	\mathop{\vbox{\m@th\ialign{##\crcr
				\kern0.08em\brevefill#1{0.8\wd\z@}\crcr\noalign{\nointerlineskip}%
				$\hss#1#2\hss$\crcr}}}\limits}
\def\brevefill#1#2{$\m@th\sbox\tw@{$#1($}%
	\hss\resizebox{#2}{\wd\tw@}{\rotatebox[origin=c]{90}{\upshape(}}\hss$}
\newcommand{\RR}{\mathbb R}
\newcommand{\NN}{\mathbb N}
\newcommand{\SSS}{\mathbb S}
\newcommand{\Sym}{\mathbb{S}}
\newcommand{\cS}{\mathcal{S}}
\newcommand{\cM}{\mathcal M}
\newcommand{\cH}{\mathcal H}
\newcommand{\cZ}{\mathcal Z}
\newcommand{\cC}{\mathcal C}
\newcommand{\benu}{\begin{enumerate}}
	\newcommand{\eenu}{\end{enumerate}}
\newcommand{\bop}{\begin{opomba}}
	\newcommand{\eop}{\end{opomba}}
\newcommand{\Bor}{\mathrm{Bor}}
\newcommand{\tr}{\mathrm{tr}}
\newcommand{\supp}{\mathrm{supp}}
\newtheorem{theorem}{Theorem}[section]
\newtheorem{corollary}[theorem]{Corollary}
\newtheorem{lemma}[theorem]{Lemma}
\theoremstyle{definition}
\newtheorem{example}[theorem]{Example}
\definecolor{green-new}{rgb}{0.0, 0.5, 0.0}
\definecolor{cyan}{rgb}{0.0, 0.8, 1.0}
\theoremstyle{definition}
\newtheorem{remark}[theorem]{Remark}
\numberwithin{equation}{section}
\begin{document}

	\numberwithin{equation}{section}

	\title[Matricial Gaussian quadrature rules: nonsingular case]
	{Matricial Gaussian quadrature rules: nonsingular case}

        \author[A. Zalar]{Alja\v z Zalar${}^{1}$}
	\address{Alja\v z Zalar, 
		Faculty of Computer and Information Science, University of Ljubljana  \& 
		Faculty of Mathematics and Physics, University of Ljubljana  \&
		Institute of Mathematics, Physics and Mechanics, Ljubljana, Slovenia.}
	\email{aljaz.zalar@fri.uni-lj.si}
	\thanks{${}^1$Supported by the ARIS (Slovenian Research and Innovation Agency)
		research core funding No.\ P1-0288 and grants No.\ J1-50002, J1-60011.}
    
	\author[I. Zobovi\v c]{Igor Zobovi\v c${}^{2}$}
	\address{Igor Zobovi\v c, 
		Faculty of Mathematics and Physics, University of Ljubljana  \&
		Institute of Mathematics, Physics and Mechanics, Ljubljana, Slovenia.}
	\email{igor.zobovic@imfm.si}
	\thanks{${}^2$Supported by the ARIS (Slovenian Research and Innovation Agency)
		research core funding No.\ P1-0222 and grant No.\ J1-50002.}

	\begin{abstract}
Let $L$ be a linear operator on univariate polynomials of bounded degree,
    mapping into real symmetric matrices, such that its moment matrix is positive definite. It is known that $L$ admits a finitely atomic positive matrix-valued representing measure $\mu$. Any $\mu$ with the smallest sum of the ranks of the matricial masses is called minimal.
 In this paper, we characterize the existence of a minimal representing measure containing a prescribed atom with prescribed rank of the corresponding mass, thus extending a recent result \cite{BKRSV20} for the scalar-valued case. As a corollary, we obtain a constructive, linear algebraic proof of the strong truncated Hamburger matrix moment problem \cite{Sim06} in the nonsingular case. The results will be important in the study of the truncated univariate rational matrix moment problem.
		\looseness=-1
	\end{abstract}

\subjclass[2020]{Primary 65D32, 47A57, 47A20, 44A60; Secondary 
15A04, 47N40.}
	
\keywords{Gaussian quadrature, truncated moment problem, matrix measure, moment matrix, localizing moment matrix.}
	\date{\today}
	\maketitle

    \section{Introduction}
	\label{introduction}
In this paper we study matricial Gaussian quadrature rules for a linear operator $L$ on univariate polynomials of bounded degree, mapping into real symmetric matrices,
such that the corresponding moment matrix is positive definite. More precisely, we fix a real number $t$ and a natural number
 $m\in \NN\cup \{0\}$ and characterize, when there is a minimal representing measure for $L$
    containing $t$ in the support
 with the rank of the corresponding mass equal to $m$.
 Apart from being interesting on its own extending a recent result \cite{BKRSV20} from scalars to matrices, the results will be importantly used in the solution to the truncated univariate matrix rational moment problem, analogous to the scalar case \cite{NZ25}.
	
Let $k\in \NN\cup \{0\}$ and $p\in \NN$. 
We denote by $\RR[x]_{\leq k}$ the vector space of univariate polynomials of degree at most $k$ and by $\Sym_p(\RR)$ the set of real symmetric matrices of size $p\times p$.
For a given linear operator
	\begin{equation} 
            \label{def:linear-mapping}
            L:\RR[x]_{\leq 2n}\to \SSS_p(\RR),
\        \end{equation}
denote by $S_i:=L(x^{i})$, $i=0,1,\ldots,2n$, its \textbf{matricial moments} and by
	\begin{equation}
		\label{def:moment-matrix}
		M(n):= (S_{i+j-2})_{i,j = 1}^{n+1} =
		\kbordermatrix{
			& \mathit{1} & X & X^2 & \cdots & X^n\\
			 \mathit{1} & S_0 & S_1 & S_2 & \cdots & S_n\\[0.2em]
			X & S_1 & S_2 & \iddots & \iddots & S_{n+1}\\[0.2em]
			X^2 & S_2 & \iddots & \iddots & \iddots & \vdots\\[0.2em]
			\vdots & \vdots 	& \iddots & \iddots & \iddots & S_{2n-1}\\[0.2em]
			X^n & S_n & S_{n+1} & \cdots & S_{2n-1} & S_{2n}
		}
	\end{equation}
	the corresponding 
        \textbf{$n$--th truncated moment matrix.} 
Assume that $M(n)$ is positive definite.
It is known (see Theorem \ref{th:Hamburger-matricial} below), that $L$
admits 
    a positive $\Sym_p(\RR)$-valued measure $\mu$ (see \eqref{subsec:matrix-measure}), such that
	\begin{equation}
		\label{moment-measure-cond}
		L(p)=\int_{\RR} p\; d\mu\quad \text{for every }p\in \RR[x]_{\leq 2n}.
	\end{equation}
	Every measure $\mu$ satisfying \eqref{moment-measure-cond}
    is a \textbf{representing measure for $L$}.
   
    A representing measure 
    $\mu=\sum_{j=1}^\ell A_j\delta_{x_j}$ for $L$,
    where each $0\neq A_j\in \Sym_p(\RR)$ is positive semidefinite
    and $\delta_{x_j}$ stands for the Dirac measure supported in $x_j$,
    is \textbf{minimal}, if 
    $\sum_{j=1}^\ell \Rank A_j$ 
    is minimal among all representing measures for $L$.
    In this case, \eqref{moment-measure-cond} is equal to
        \begin{equation}
		\label{moment-measure-cond-Gaussian}
		L(p)=\sum_{j=1}^\ell A_j p(x_j),
	\end{equation}
    and \eqref{moment-measure-cond-Gaussian} is 
    a \textbf{matricial Gaussian quadrature rule for $L$}.
    The points $x_j$ are \textbf{atoms} of the measure $\mu$. If $x_1,x_2,\ldots,x_\ell$ are pairwise distinct, 
    then for each $j$, the matrix $A_j = \mu(\{x_j\})$ is the \textbf{mass} of $\mu$ at $x_j$ and its rank is the \textbf{multiplicity} of $x_j$ in $\mu$, which we denote by 
    $\mult_\mu x_j$. If $x$ is not an atom of $\mu$, then $\mult_\mu x:=0$.\\

    The motivation of the paper is to settle the following problem.\\

    \noindent \textbf{Problem.}
    Let $L$ be as in \eqref{def:linear-mapping} such that $M(n)$ (see \eqref{def:moment-matrix}) is positive definite.
    Given $t\in \RR$ and $m\in \NN\cup \{0\}$,
    characterize when there exists a minimal representing measure $\mu$ for $L$ such that $\mult_\mu t=m$.\\

In \cite[Theorem 1.4]{BKRSV20}, the authors solved the scalar version (i.e., $p=1$ in \eqref{def:linear-mapping}) of the Problem in terms of symmetric determinantal representations involving moment matrices. They also showed how to determine other atoms of $\mu$ based on the determinant of some univariate matrix polynomial.
 Their proof uses convex analysis and algebraic geometry,
 while an alternative proof, using moment theory, and an extension to minimal measures with finitely many prescribed atoms, appears in \cite{NZ+}.
 We mention that in \cite{BKRSV20}, a version of the Problem with an atom at $\infty$,
 called \textbf{evaluation at $\infty$}, is also studied. The corresponding quadrature rules are called \textit{generalized Gaussian quadrature rules}.
We also mention that in the scalar case, the restriction to the case where $M(n)$ is positive definite is natural. Namely, if $M(n)$ is positive semidefinite but not positive definite, then the minimal representing measure is uniquely determined \cite[Theorems 3.9 and 3.10]{CF91}. This fact does not generalize to the matrix case and a version of the Problem with positive semidefinite $M(n)$ is relevant. Moreover, it turns out that this version is technically more involved and will be treated in our forthcoming work \cite{ZZ+}.
 \\
 

The main result of the paper is the solution to the Problem above.

\begin{theorem}
		\label{th:mainTheorem}
         Let $n, p \in \NN$
         and
        $L:\RR[x]_{\leq 2n}\to \SSS_p(\RR)$
        be a linear operator
        such that 
        $M(n)$ is positive definite.
        Fix $t\in \RR$ and $m\in \NN\cup\{0\}$.
        Let $\cH:=(S_{i+j-1}-tS_{i+j-2})_{i,j=1}^n.$
        Then the following statements are equivalent:
        \begin{enumerate}
            \item 
            \label{th:mainTheorem-pt1}
            There exists a minimal
            representing measure $\mu$
            for $L$ such that $\mult_\mu t=m$.
            \item 
			\label{th:mainTheorem-pt2}
			$m\leq \Rank \cH-(n-1)p$.
        \end{enumerate}
	\end{theorem}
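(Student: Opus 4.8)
The plan is to recast everything in terms of the inner product that $L$ induces on $\mathbb R^p$-valued polynomials and then to realize minimal measures as self-adjoint extensions of multiplication by $x$. For $\mathbb R^p$-valued polynomials $f=\sum_i f_ix^i$, $g=\sum_j g_jx^j$ set $\langle f,g\rangle_L:=\sum_{i,j}f_i^{\top}S_{i+j}g_j$; since $M(n)\succ0$, this is a genuine inner product on the space $\mathcal P_{k}$ of $\mathbb R^p$-valued polynomials of degree $\le k$ for $k\le n$, with $\dim\mathcal P_n=(n+1)p$. First I would observe that $\cH$ is exactly the Gram matrix of the bilinear form $(f,g)\mapsto\langle(x-t)f,g\rangle_L$ on $\mathcal P_{n-1}$, so that $\Rank\cH=np-\dim\mathrm{rad}$, where the radical is $\{f\in\mathcal P_{n-1}:(x-t)f\perp_L\mathcal P_{n-1}\}$. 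Writing $\mathcal N:=\mathcal P_{n-1}^{\perp_L}\subseteq\mathcal P_n$ (so $\dim\mathcal N=p$) and using that $f\mapsto(x-t)f$ is injective, the radical is isomorphic to $(x-t)\mathcal P_{n-1}\cap\mathcal N=\{h\in\mathcal N:h(t)=0\}$. Hence
\[
\Rank\cH-(n-1)p=p-\dim\{h\in\mathcal N:h(t)=0\}=\dim\{h(t):h\in\mathcal N\}=:\dim\mathcal N(t),
\]
so statement \eqref{th:mainTheorem-pt2} is equivalent to the intrinsic condition $m\le\dim\mathcal N(t)$.

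Next I would set up the operator picture. Multiplication by $x$ defines a symmetric operator $S:\mathcal P_{n-1}\to\mathcal P_n$, $Sf=xf$, and I claim that minimal representing measures for $L$ are in bijection with self-adjoint operators $\hat S$ on $(\mathcal P_n,\langle\cdot,\cdot\rangle_L)$ extending $S$. Indeed, given such an $\hat S$, its spectral decomposition $\hat S=\sum_j x_jE_j$ yields masses $A_j$ determined by $c^{\top}A_jc'=\langle E_jc,c'\rangle_L$ on constants $c,c'\in\mathbb R^p$; the relations $\langle\hat S^{a}c,\hat S^{b}c'\rangle_L=c^{\top}S_{a+b}c'$ for $a,b\le n$ show that $\mu=\sum_jA_j\delta_{x_j}$ represents $L$, and a rank count gives $\sum_j\Rank A_j=(n+1)p$ with $\Rank A_j=\dim\ker(\hat S-x_j)$, so $\mu$ is minimal and $\mult_\mu x_j=\dim\ker(\hat S-x_j)$. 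Conversely every minimal measure produces such an $\hat S$ through the evaluation isometry $\mathcal P_n\to\bigoplus_j\Ran A_j$; this is also where the existence guaranteed by Theorem~\ref{th:Hamburger-matricial} (equivalently, the matricial flat-extension theorem) enters. The extensions themselves are parametrized by a symmetric operator $B$ on $\mathcal N$ via $\hat S_B=\hat S_0+P_{\mathcal N}BP_{\mathcal N}$ for a fixed reference extension $\hat S_0$. The task thus becomes: determine which multiplicities $\dim\ker(\hat S_B-t)$ are attainable as $B$ ranges over symmetric operators.

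For the implication \eqref{th:mainTheorem-pt1}$\Rightarrow$\eqref{th:mainTheorem-pt2} I would use that every eigenvector of $\hat S_B$ at $t$ lies in the defect space $\mathcal D:=((x-t)\mathcal P_{n-1})^{\perp_L}$, which has dimension $p$ and on which $\ev_t:\psi\mapsto\psi(t)$ is an isomorphism onto $\mathbb R^p$. For $v\in\mathcal D$ one checks $(\hat S_B-t)v\in\mathcal N$, giving a map $\Psi_B:\mathcal D\to\mathcal N$ with $\ker(\hat S_B-t)=\ker\Psi_B$. On the subspace $\mathcal D\cap\mathcal P_{n-1}$, which has dimension $d=\dim\{h\in\mathcal N:h(t)=0\}$, the map acts as $v\mapsto(x-t)v$, independently of $B$ and injectively; hence $\Rank\Psi_B\ge d$ and $\mult_\mu t=p-\Rank\Psi_B\le p-d=\dim\mathcal N(t)$.

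The converse \eqref{th:mainTheorem-pt2}$\Rightarrow$\eqref{th:mainTheorem-pt1} is the constructive heart and the step I expect to be the main obstacle. The key structural fact is that $Q(u,u'):=\langle(\hat S_0-t)u,u'\rangle_L$ is a \emph{symmetric} form on $\mathcal D$, because $\hat S_0$ is self-adjoint. Writing $\rho:=P_{\mathcal N}|_{\mathcal D}$ (of rank $p-d$ with kernel $\mathcal D\cap\mathcal P_{n-1}$), given $m\le p-d=\dim\mathcal N(t)$ I would choose an $m$-dimensional $U\subseteq\mathcal D$ with $U\cap\ker\rho=0$, and define $B$ on $\rho(U)$ by $B(\rho u):=-(\hat S_0-t)u$; the required symmetry $\langle B\rho u,\rho u'\rangle_L=-Q(u,u')$ holds precisely because $Q$ is symmetric, so $B$ extends to a symmetric operator on $\mathcal N$, and then $U\subseteq\ker\Psi_B$ forces $\mult_\mu t\ge m$, hence $=m$ by the necessity bound. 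The delicate points, where the real work lies, are making the measure/extension correspondence and the multiplicity bookkeeping rigorous, ensuring that a generic such $B$ produces multiplicity \emph{exactly} $m$ rather than larger (so that all intermediate values, and in particular the extremal value $m=p-d$, are realized), and translating the construction back into the explicit choice of the new moment $S_{2n+1}$ so that the argument is genuinely linear-algebraic and constructive.
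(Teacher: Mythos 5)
Your operator-theoretic recasting is a genuinely different route from the paper's (which works directly with moment and localizing moment matrices and never leaves explicit linear algebra; your picture is closer in spirit to the self-adjoint-extension method of \cite{Sim06} that the paper deliberately avoids), and most of it checks out: the identity $\Rank\cH-(n-1)p=\dim\{h(t):h\in\mathcal N\}$ is correct, the correspondence between minimal measures and self-adjoint extensions is sound (the constants are cyclic for any extension, since $\hat S^a c=x^a c$ for $a\le n$, which gives $\Rank A_j=\dim\Ker(\hat S-x_j)$ and the evaluation map $f\mapsto(A_j^{1/2}f(x_j))_j$ is a surjective isometry), and your necessity argument --- eigenvectors at $t$ lie in $\mathcal D=((x-t)\mathcal P_{n-1})^{\perp_L}$, $\Psi_B$ is $B$-independent and injective on the $d$-dimensional subspace $\mathcal D\cap\mathcal P_{n-1}$ --- is a complete and arguably cleaner proof of \eqref{th:mainTheorem-pt1}$\Rightarrow$\eqref{th:mainTheorem-pt2} than the paper's chain of rank computations. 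The symmetry-of-$Q$ trick showing that $B(\rho u):=-(\hat S_0-t)u$ is consistently extendable to a symmetric operator on $\mathcal N$ is also valid, and it parallels the paper's verification that $\widehat Z_3$ in \eqref{equality-v3} is symmetric.

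The genuine gap is the one you flagged yourself and then did not fill: your construction only yields $\mult_\mu t\ge m$, and the necessity bound upgrades this to equality only in the extremal case $m=\Rank\cH-(n-1)p$. For all smaller $m$ --- including $m=0$, which is exactly Corollary \ref{co:corollary-atom-avoidance} and hence the engine behind Corollary \ref{co:Simonov-pd} --- nothing in the proposal rules out that \emph{every} symmetric extension of the partially prescribed $B$ has kernel strictly larger than $U$; the bad extensions form a union of affine subspaces parametrized by the projective space of $\mathcal D/U$, so ``generic $B$ works'' is not free: semicontinuity only says the generic rank of $\Psi_B$ is maximal over extensions, and one must still exhibit a single extension attaining rank $p-m$. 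In your language this is unavoidable because $\cH_{x-t}(n)$ (with $S_{2n+1}$ encoding $B$) is the Gram matrix of $(f,g)\mapsto\langle(\hat S_B-t)f,g\rangle_L$ on $\mathcal P_n$, so exactness of the multiplicity is precisely the statement $\dim\Ker\cH_{x-t}(n)=m$, i.e., \eqref{condition-to-prove}. This is where the paper does its real work: it chooses the free symmetric block $\widehat Z_1$ so that the bordered matrix in \eqref{choice-of-hat-Z1} is invertible, and then --- since Lemma \ref{le:determinant-of-matrix-valued-polynomial-v1} gives $\det H(x)=(x-t)^m g(x)$ with $g\neq 0$ but \emph{not} $g(t)\neq 0$ --- it forces $\mult_\mu t=m$ by the closing contradiction argument \eqref{rank-inequality-computation-end-of-proof-2to1}--\eqref{rank-equality-computation-end-of-proof-2to1}, which reuses the necessity computation against the rank equality \eqref{rank-equality-computation-end-of-proof-2to1}. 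To complete your proof you would need the analogous step: choose the free symmetric block of $\hat B$ on a complement of $\rho(U)$ in $\mathcal N$ so that the form $\langle(\hat S_B-t)f,g\rangle_L$ has radical of dimension exactly $m$, and prove such a choice exists. Until that is supplied, your argument establishes \eqref{th:mainTheorem-pt2}$\Rightarrow$\eqref{th:mainTheorem-pt1} only for the single value $m=\Rank\cH-(n-1)p$.
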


In the case $m=0$, Theorem \ref{th:mainTheorem} simplifies to the following result.

    \begin{corollary}
		\label{co:corollary-atom-avoidance}
               Let $n, p \in \NN$
               and
        $L:\RR[x]_{\leq 2n}\to \SSS_p(\RR)$
        be a linear operator
        such that 
        $M(n)$ is positive definite.
        Fix $t\in \RR$. Then 
            there exists a minimal 
            representing measure $\mu$
            for $L$ such that $\mult_\mu t=0$.
    \end{corollary}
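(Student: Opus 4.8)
The plan is to deduce the corollary from Theorem \ref{th:mainTheorem} by specializing to $m=0$. For $m=0$, condition \eqref{th:mainTheorem-pt2} reads $0\le \Rank\cH-(n-1)p$, i.e.\ $\Rank\cH\ge (n-1)p$. So the whole task reduces to showing that this rank bound holds \emph{automatically} whenever $M(n)$ is positive definite; the existence of the desired measure $\mu$ with $\mult_\mu t=0$ then follows from the implication \eqref{th:mainTheorem-pt2}$\Rightarrow$\eqref{th:mainTheorem-pt1}. Since $\cH$ is a symmetric $np\times np$ matrix, the inequality $\Rank\cH\ge (n-1)p$ is equivalent to $\dim\Ker\cH\le p$, and this is what I would establish directly by a linear-algebraic argument.

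To set this up, I would work in the space $\mathcal{P}_k:=(\RR[x]_{\le k})^p$ of $\RR^p$-valued polynomials of degree at most $k$, identifying $\vec p(x)=\sum_{i=0}^k x^i c_i$ with its coefficient vector $\hat p=(c_0,\dots,c_k)\in\RR^{(k+1)p}$. Because $M(n)$ is positive definite, the symmetric bilinear form $\langle \vec p,\vec q\rangle:=\hat p^{\,T}M(n)\hat q$ is an inner product on $\mathcal{P}_n$ (dimension $(n+1)p$), and its restriction to the subspace $\mathcal{P}_{n-1}$ (dimension $np$) stays nondegenerate. A short Hankel computation, using $S_{i+j+1}-tS_{i+j}=L\bigl((x-t)x^{i+j}\bigr)$, then shows that for $\vec p,\vec q\in\mathcal{P}_{n-1}$ one has $\hat p^{\,T}\cH\,\hat q=\langle \vec p,(x-t)\vec q\rangle$; that is, $\cH$ is the Gram matrix of the shifted form $(\vec p,\vec q)\mapsto\langle \vec p,(x-t)\vec q\rangle$ on $\mathcal{P}_{n-1}$. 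All moments occurring in $\cH$ are entries of $M(n)$, so this identity is purely algebraic and needs no representing measure.

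The key step is to read off the kernel from this description. A vector $\vec q\in\mathcal{P}_{n-1}$ lies in $\Ker\cH$ precisely when $(x-t)\vec q$ is orthogonal to all of $\mathcal{P}_{n-1}$ with respect to $\langle\cdot,\cdot\rangle$. Multiplication by $(x-t)$ maps $\mathcal{P}_{n-1}$ injectively into $\mathcal{P}_n$, and by the preceding remark it carries $\Ker\cH$ into the orthogonal complement $\mathcal{P}_{n-1}^{\perp}$ taken inside $\mathcal{P}_n$. Since the inner product is nondegenerate, $\dim\mathcal{P}_{n-1}^{\perp}=\dim\mathcal{P}_n-\dim\mathcal{P}_{n-1}=p$, so injectivity gives $\dim\Ker\cH\le p$, hence $\Rank\cH\ge(n-1)p$, which is exactly \eqref{th:mainTheorem-pt2} for $m=0$; invoking Theorem \ref{th:mainTheorem} then finishes the proof. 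I expect the only delicate point to be the bookkeeping that identifies $\cH$ with the shifted Gram form and verifies that its image under multiplication by $(x-t)$ really lands in $\mathcal{P}_{n-1}^{\perp}$; once that is in place, the dimension count is immediate.
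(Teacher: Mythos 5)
Your proposal is correct and follows essentially the same route as the paper: both specialize Theorem \ref{th:mainTheorem} to $m=0$, so that everything reduces to the bound $\Rank\cH\ge(n-1)p$, and both derive this bound purely from the nondegeneracy of $M(n)$, without any representing measure. Your Gram-form identity $\hat p^{\,T}\cH\,\hat q=\langle\vec p,(x-t)\vec q\rangle$, combined with the injectivity of multiplication by $(x-t)$ and the count $\dim\mathcal{P}_{n-1}^{\perp}=p$, is a coordinate-free restatement of the paper's computation, which notes that the $(n+1)p\times np$ matrix $\begin{pmatrix}\mathcal{H}_{x-t}(n-1)\\ \big({((x-t)\cdot\mathbf{v})}_{n}^{(n-1)}\big)^{T}\end{pmatrix}$ arises from the invertible $M(n)$ by column operations, hence has full column rank $np$, so deleting its last $p$ rows (which is exactly passing to $\cH$, i.e.\ your bound $\dim\Ker\cH\le p$) loses at most $p$ in rank.
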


A simple consequence of Corollary \ref{co:corollary-atom-avoidance} is a solution
to a strong truncated matrix Hamburger moment problem in the nonsingular case
(i.e., the matrix in \eqref{strong-moment-matrix} below is invertible).

    \begin{corollary}
        \label{co:Simonov-pd}
        Let $n_1,n_2, p \in \NN$ and
        $S_k\in \Sym_p(\RR)$ for $k=-2n_1,-2n_1+1,\ldots,2n_2$.
        Assume that the matrix 
        \begin{equation}
        \label{strong-moment-matrix}
            (S_{i+j-2-2n_1})_{i,j=1}^{n_1+n_2+1}
        \end{equation}
        is positive definite.
        Then 
            there exists a
            measure $\mu$
            such that $S_k=\int_\RR x^k d\mu$ for each $k$.
    \end{corollary}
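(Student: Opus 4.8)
The plan is to reduce the strong problem to the ordinary (one-sided) truncated matrix Hamburger problem by an index shift, and then transport the resulting measure back by multiplying it with $x^{2n_1}$. Set $N:=n_1+n_2$ and define ordinary moments $T_\ell:=S_{\ell-2n_1}$ for $\ell=0,1,\ldots,2N$. Then the $N$-th moment matrix of the sequence $(T_\ell)_\ell$ is exactly $(T_{i+j-2})_{i,j=1}^{N+1}=(S_{i+j-2-2n_1})_{i,j=1}^{N+1}$, which is the matrix \eqref{strong-moment-matrix} and hence positive definite by hypothesis. Thus the linear operator $L_T\colon\RR[x]_{\leq 2N}\to\SSS_p(\RR)$, $L_T(x^\ell)=T_\ell$, satisfies the hypotheses of the results above.

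The crucial step is to produce a representing measure for $L_T$ that does not charge the origin. By Corollary \ref{co:corollary-atom-avoidance} applied with $t=0$, there is a minimal representing measure $\nu=\sum_{j} A_j\delta_{x_j}$ for $L_T$ with $\mult_\nu 0=0$, i.e.\ $x_j\neq 0$ for every $j$. I would then define $\mu:=\sum_{j} x_j^{2n_1}A_j\,\delta_{x_j}$, equivalently $d\mu=x^{2n_1}\,d\nu$. Since $x_j^{2n_1}>0$ and each $A_j$ is positive semidefinite, this is again a positive $\SSS_p(\RR)$-valued measure.

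It remains to verify the moment identities. For every $k\in\{-2n_1,\ldots,2n_2\}$ one has $k+2n_1\in\{0,\ldots,2N\}$, and because all atoms of $\nu$ are nonzero the computation
\[
\int_\RR x^k\,d\mu=\sum_j x_j^{k}\,x_j^{2n_1}A_j=\sum_j x_j^{\,k+2n_1}A_j=T_{k+2n_1}=S_k
\]
is legitimate even for negative $k$. The one place where everything hinges on atom-avoidance is the lowest moment $k=-2n_1$: there $x_j^{k+2n_1}=x_j^0=1$, so $\int_\RR x^{-2n_1}\,d\mu=\sum_j A_j=T_0=S_{-2n_1}$ holds only because the sum runs over atoms all different from $0$; had $\nu$ carried a mass at $0$, that mass would be annihilated by the factor $x^{2n_1}$ and hence invisible to $\mu$, so the identity would fail by exactly that mass. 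This is the main (and essentially only) obstacle, and it is precisely what Corollary \ref{co:corollary-atom-avoidance} removes.
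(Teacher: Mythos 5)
Your proof is correct and coincides with the paper's own argument: both shift indices to obtain an ordinary moment sequence with positive definite moment matrix, invoke Corollary \ref{co:corollary-atom-avoidance} with $t=0$ to obtain a representing measure avoiding the origin, and then rescale each mass by $x_j^{2n_1}$ to recover the negative moments. Your additional remark pinpointing why atom-avoidance is indispensable (a mass at $0$ would be annihilated by the factor $x^{2n_1}$ and the identity for $k=-2n_1$ would fail) is a correct observation that the paper leaves implicit.
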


Corollary \ref{co:Simonov-pd} is a special case of \cite[Theorem 3.3]{Sim06}
under the assumption that the matrix in \eqref{strong-moment-matrix} is positive definite.
The techniques in \cite{Sim06} use involved operator theory, by 
studying self-adjoint extensions of certain, not necessarily everywhere defined, linear operator on the finite dimensional Hilbert space of
vector-valued Laurent polynomials. Our contribution is a constructive, linear algebraic proof, in the sense that representing measures can be
easily computed following the steps in the proof of Theorem \ref{th:mainTheorem} (see Examples  \ref{ex:non-uniqueness} and \ref{ex:2}).
To extend Corollary \ref{co:Simonov-pd} to the singular case (i.e., the matrix in \eqref{strong-moment-matrix} is only positive semidefinite and not necessarily definite), Theorem \ref{th:mainTheorem} needs to be extended to the case $M(n)$ is positive semidefinite \cite{ZZ+}.
In the scalar case an alternative proof of \cite[Theorem 3.3]{Sim06} is \cite[Theorem 3.1]{Zal22b}.

Matricial Gaussian quadrature rules have been studied by several authors (e.g., \cite{DD02,DLR96,DS03}).
These works address the question of computing atoms and masses of a representing measure, which
 is uniquely determined after the odd matricial moment $S_{2n+1}$ is fixed.
The formulas are in terms of the roots of the corresponding orthogonal matrix polynomial.
A novelty of our results is that we do not specify $S_{2n+1}$,
but characterize, when there is a suitable $S_{2n+1}$, that leads to a minimal measure containing a prescribed atom with prescribed multiplicity. In the proof, we essentially construct $S_{2n+1}$ with the required properties such that the extended moment matrix
$M(n+1)$, with $\Rank M(n+1)=\Rank M(n)$, has a suitable block column relation (see
Section \ref{sec:support-of-the-measure}).

Recently, a question related to the Problem was studied in \cite{FKM24}. Namely, the authors describe for $t\in \RR$, the set of all possible masses at $t$ over all representing measures for $L$.
In particular, the maximal mass is determined. The focus of our work is on \textit{minimal} representing measures with \textit{fixed multiplicity} of the mass at $t$.
 In a multivariate setting, the set 
of possible atoms in a representing measure has been charactarized in \cite{MS23+},
while the question of possible masses in a given point was studied in \cite{MS23++}.

	\subsection{Reader's guide} 
In Section \ref{sec:prel} we introduce the notation and some preliminary results.
 In Section \ref{sec:posDef} we present proofs of our main results, i.e., 
 Theorem \ref{th:mainTheorem} and Corollaries \ref{co:corollary-atom-avoidance}, \ref{co:Simonov-pd}.
 In Section \ref{sec:examples} we demonstrate the application of Theorem \ref{th:mainTheorem} on numerical examples (see Examples  \ref{ex:non-uniqueness} and \ref{ex:2}).
 In particular, we show that 
 a minimal representing measure containing a prescribed atom with prescribed multiplicity is not unique and that a given atom can be a part of minimal representing measures with different multiplicities.
 Finally, in Section \ref{sec:posDef-generalized}
 we allow the evaluation at $\infty$ and prove a sufficient condition for the existence of a generalized matricial Gaussian quadrature rule containing $\Rank M(n-1)$ real atoms, among which a prescribed atom has a prescribed multiplicity (see Theorem \ref{th:WithAtomInfinity}).
    
    \section{Preliminaries}
	\label{sec:prel}
	
    Let $m,m_1,m_2\in \NN$.
    We write $M_{m_1,m_2}(\RR)$ for the set of $m_1\times m_2$ real matrices
    and $M_m(\RR)\equiv M_{m,m}(\RR)$ for short.
    For a matrix $A\in M_{m_1,m_2}(\RR)$
    we call the linear span of its columns
    a $\textbf{column space}$ and denote it by $\cC(A)$.
    We denote by $I_{m}$ the identity $m\times m$ matrix and by $\mathbf{0}_{m_1,m_2}$ the zero
    $m_1\times m_2$ matrix, while $\mathbf{0}_m\equiv \mathbf{0}_{m,m}$ for short.
		We use $M_m(\RR[x])$ to denote $m\times m$ matrices over $\RR[x]$. The elements of $M_m(\RR[x])$ 
		are called \textbf{matrix polynomials}.

     Let $p\in \NN$. 
	For $A\in \Sym_p(\RR)$ the notation $A\succeq 0$ (resp.\ $A\succ 0$) means $A$ is positive semidefinite (psd) (resp.\ positive definite (pd)).
	We use $\Sym^{\succeq 0}_p(\RR)$ for the subset  of all psd matrices in $\Sym_p(\RR)$.
		
		Given a polynomial $p(x)\in \RR[x]$, we write 
		$\cZ(p(x)):=\{x\in \RR\colon p(x)=0\}$ for the set of its zeros.

    \subsection{Matrix measures}
    \label{subsec:matrix-measure}
    Let $\Bor(\RR)$
	be the Borel $\sigma$-algebra of $\RR$. 
	We call 
	$$\mu=(\mu_{ij})_{i,j=1}^p:\Bor(\RR)\to \Sym_p(\RR)$$ 
	a $p\times p$ \textbf{Borel matrix-valued measure} supported on $\RR$
    (or \textbf{positive $\Sym_p(\RR)$-valued measure})
	if
	\begin{enumerate}
		\item 
		$\mu_{ij}:\Bor(\RR)\to \RR$ 
		is a real measure for every $i,j=1,2,\ldots,p$ and
		\smallskip
		\item
		$\mu(\Delta)\succeq 0$ for every $\Delta\in \Bor(\RR)$.
	\end{enumerate} 
    
A positive $\Sym_p(\RR)$-valued measure $\mu$ is \textbf{finitely atomic}, if there exists a finite set $M\in \Bor(\RR)$
 such that
$\mu(\RR\setminus M)=\mathbf 0_{p}$
or
equivalently,
$\mu=\sum_{j=1}^\ell A_j\delta_{x_j}$
for some $\ell\in \NN$, $x_j\in \RR$, $A_j\in \Sym_p^{\succeq 0}(\RR)$.
Let $\mu$ be a positive $\Sym_p(\RR)$-valued measure
and 
$\tau:=\tr(\mu)=\sum_{i=1}^p \mu_{ii}$ denote its trace measure. 
A polynomial $f\in \RR[x]_{\leq k}$ is $\mu$-integrable if 
$f\in L^1(\tau)$. 
We define its integral by
$$
\int_\RR f\;d\mu
=
\Big(\int_\RR f\; d\mu_{ij}\Big)_{i,j=1}^p.
$$

\subsection{Riesz mapping}
    Equivalently, one can define $L$ as in \eqref{def:linear-mapping}
by a sequence of its values on monomials $x^i$, $i=0,1,\ldots,2n$. 
Throughout the paper we will denote these values
by $S_i:=L(x^i)$. 
If
    $\mathcal S:=(S_0,S_1,\ldots,S_{2n})\in (\Sym_p)^{2n+1}$
is given, then we denote the corresponding linear mapping on 
$\RR[x]_{\leq 2n}$ by $L_{\mathcal S}$ and call it a \textbf{Riesz 
mapping of $\mathcal S$}. 

	\subsection{Moment matrix and localizing moment matrices}
	  For $n\in \NN$
        and 
        \begin{equation}
            \label{def:sequence}
                \mathcal S:=(S_0,S_1,\ldots,S_{2n})\in (\Sym_p)^{2n+1},
        \end{equation}
        we denote by $M(n)\equiv M_\cS(n)$ as in \eqref{def:moment-matrix}
        its $n$--th truncated moment matrix.
    For $i,j\in \NN\cup \{0\}$, $i+j\leq 2n$, we also write
\begin{equation}
    \label{def:columns-mm}
    \textbf{v}_i^{(j)}
	=\begin{pmatrix}S_{i+r-1}\end{pmatrix}_{r=1}^{j+1}
	=\begin{pmatrix}
		S_{i} \\ S_{i+1} \\ \vdots \\ S_{i+j}
 	\end{pmatrix}
\end{equation}

		Given $f \in \RR[x]_{\leq 2n}$ and a linear operator $L:\RR[x]_{\leq 2n}\to \Sym_{p}(\RR)$, we define an \textbf{$f$--localizing linear operator} by 
		$$L_f:\RR[x]_{\leq 2n - \deg f}\to \Sym_p(\RR),
		\quad
		L_f(g) := L(fg).
		$$
We call the $\ell$-th truncated moment matrix of  $L_f$ the \textbf{$\ell$--th truncated $f$--localizing moment matrix of $L$} and denote it by $\mathcal{H}_f(\ell)$. 
Defining $$S_i^{(f)} := L_f(x^i) = L(fx^i),$$ we have
\begin{equation*}
    \label{localized-truncated}
		\cH_{f}(\ell):=\left(S^{(f)}_{i+j-2} \right)_{i,j=1}^{\ell+1}
					=	\kbordermatrix{
							& \mathit{1} & X & X^2 & \cdots  & X^{\ell} \\[0.2em]
							\textit{1} & S^{(f)}_0 & S^{(f)}_1 & S^{(f)}_2 & \cdots & S^{(f)}_\ell\\[0.2em]
							X & S^{(f)}_1 & S^{(f)}_2 & \iddots & \iddots & S^{(f)}_{\ell+1}\\[0.2em]
							X^2 & S^{(f)}_2 & \iddots & \iddots & \iddots & \vdots\\[0.2em]
							\vdots &\vdots 	& \iddots & \iddots & \iddots & S^{(f)}_{2\ell-1}\\[0.2em]
							X^\ell & S^{(f)}_\ell & S^{(f)}_{\ell+1} & \cdots & S^{(f)}_{2\ell-1} & S^{(f)}_{2\ell}
						}
	\end{equation*}
		In particular, for $f(x) = x-t$ with $t \in \RR$, we have
		\begin{equation*}
			\mathcal{H}_{x-t}(\ell) 
					=	\kbordermatrix{
							& \mathit{1} & X & X^2 & \cdots  & X^{\ell} \\[0.2em]
							\textit{1} & S_1-tS_0 & S_2-tS_1  & S_3-tS_2  & \cdots & S_{\ell+1}-tS_{\ell}\\[0.2em]
							X & S_2-tS_1  & S_3-tS_2  & \iddots & \iddots & S_{\ell+2}-tS_{\ell+1} \\[0.2em]
							X^2 & S_3-tS_2  & \iddots & \iddots & \iddots & \vdots\\[0.2em]
							\vdots &\vdots 	& \iddots & \iddots & \iddots & S_{2\ell}-tS_{2\ell-1} \\[0.2em]
							X^\ell & S_{\ell+1}-tS_{\ell}  & S_{\ell+2}-tS_{\ell+1} & \cdots & S_{2\ell}-tS_{2\ell-1}& S_{2\ell+1}-tS_{2\ell}
						}.
		\end{equation*}

        For $i,j\in \NN\cup \{0\}$, $i+j\leq 2n-\deg f$, we also write
\begin{equation}
    \label{def:columns-loc-mm}
    {(f\cdot \textbf{v})}_i^{(j)}
	:=\begin{pmatrix}S^{(f)}_{i+r-1}\end{pmatrix}_{r=1}^{j+1}
	=\begin{pmatrix}
		S_{i}^{(f)} \\ S^{(f)}_{i+1} \\ \vdots \\ S_{i+j}^{(f)}
 	\end{pmatrix}.
\end{equation}

 \subsection{Solution to the truncated matrix Hamburger moment problem}

        \begin{theorem}
        [{\cite[Theorem 2.7.6]{BW11}}]
        \label{th:Hamburger-matricial}
        Let $n, p \in \NN$ and
        \begin{equation*}
        \mathcal S\equiv \mathcal S^{(2n)}
        =(S_0,S_1,\ldots,S_{2n})\in (\Sym_p(\RR))^{2n+1}
        \end{equation*}
        be a given sequence.
        Then the following statements are equivalent:
        \begin{enumerate}
            \item 
            There exists a  
            representing measure for $\mathcal S$.
            \item 
            There exists a $(\Rank M(n))$--atomic 
            representing measure for $\mathcal S$.
            \item 
                $M(n)$
                is positive semidefinite
                and 
                $
                \cC(\mathbf{v}_{n+1}^{(n-1)})
                \subseteq
                \cC(M(n)),
                $
                where $\mathbf{v}_{n+1}^{(n-1)}$ is as in \eqref{def:columns-mm}.
        \end{enumerate}
        \end{theorem}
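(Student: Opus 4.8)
The plan is to establish the cycle $(2)\Rightarrow(1)\Rightarrow(3)\Rightarrow(2)$. The implication $(2)\Rightarrow(1)$ needs nothing, since a finitely atomic measure is in particular a representing measure. For $(1)\Rightarrow(3)$ I would first read positive semidefiniteness off the integral representation: for a block vector $\xi=(\xi_0^T,\dots,\xi_n^T)^T$ with $\xi_i\in\RR^p$, put $P_\xi(x)=\sum_{i=0}^n x^i\xi_i\in\RR^p$; then
$$\xi^T M(n)\,\xi=\sum_{i,j=0}^n \xi_i^T\Big(\int_\RR x^{i+j}\,d\mu\Big)\xi_j=\int_\RR P_\xi(x)^T\,d\mu\,P_\xi(x)\ \ge\ 0,$$
because $\mu$ is positive.

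For the range inclusion in $(3)$ I would argue through kernel vectors. A vector $u=(u_0^T,\dots)^T$ in the kernel of the moment matrix yields, via $0=u^T M(n)\,u=\int_\RR P_u^T\,d\mu\,P_u$ with $P_u(x)=\sum_i x^i u_i$, the pointwise vanishing $d\mu^{1/2}P_u=0$ almost everywhere, and hence $d\mu\,P_u=0$. Multiplying this identity by a power of $x$ and integrating, keeping all exponents $\le 2n$ so that only the prescribed moments occur, transports every such column relation to the higher-moment block; this is exactly the content of the inclusion $\cC(\textbf{v}_{n+1}^{(n-1)})\subseteq\cC(M(n))$.

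The heart is $(3)\Rightarrow(2)$, which I would carry out by a block flat-extension argument. Write $M(n)=\left(\begin{smallmatrix}M(n-1)&B\\ B^T&S_{2n}\end{smallmatrix}\right)$ with $B=\textbf{v}_n^{(n-1)}$. Positive semidefiniteness gives $\cC(B)\subseteq\cC(M(n-1))$, so the inclusion in $(3)$ lets me select $S_{2n+1}\in\Sym_p(\RR)$ for which the prospective new block column $\textbf{v}_{n+1}^{(n)}$, with blocks $S_{n+1},\dots,S_{2n+1}$, lies in $\cC(M(n))$; the new corner $S_{2n+2}$ is then forced by the Hankel pattern, producing a rank preserving extension $M(n+1)$ with $\Rank M(n+1)=\Rank M(n)$. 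A flat extension propagates: $M(n+1)$ admits unique flat extensions $M(n+k)$ of every order, yielding an infinite positive semidefinite block Hankel matrix of finite rank $r=\Rank M(n)$. Its column relations encode a linear recurrence whose associated monic matrix polynomial has only real roots; these are the atoms $x_j$, while the masses $A_j\succeq 0$ are recovered from the resulting block Vandermonde system, with $\sum_j\Rank A_j=r$. This is the desired $(\Rank M(n))$-atomic representing measure, closing the cycle.

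The main obstacle is the sufficiency direction, and inside it two matrix-specific points. First, one must verify that the range hypothesis genuinely permits a choice of $S_{2n+1}$ compatible with both the Hankel pattern and positivity; this is where the reduction via $\cC(B)\subseteq\cC(M(n-1))$ is essential, since in the block setting the free block $S_{2n+1}$ only feeds the last block row. Second, extracting the measure must track not merely the number of atoms but the \emph{ranks} of the matricial masses, so that $\sum_j\Rank A_j$ equals $\Rank M(n)$ exactly; establishing this count, together with the reality of the roots of the associated matrix polynomial, is the technically delicate part.
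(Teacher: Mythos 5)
First, note that the paper does not prove this statement at all: it is quoted as a preliminary from \cite{BW11} (Theorem 2.7.6), so there is no internal proof to compare against; your outline should therefore be judged on its own, and it does follow the standard flat-extension architecture. Two smaller corrections before the main point. In $(1)\Rightarrow(3)$ the kernel transport must start from $u\in\Ker M(n-1)$, i.e.\ from coefficient vectors of polynomials of degree at most $n-1$: then $d\mu\,P_u=0$ paired with $x^{n+1}$ keeps all exponents $\leq 2n$ and yields exactly $\big(\mathbf{v}_{n+1}^{(n-1)}\big)^T u=\mathbf{0}$, hence $\cC(\mathbf{v}_{n+1}^{(n-1)})\subseteq\cC(M(n-1))$. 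Starting from $u\in\Ker M(n)$, as you write, fails: multiplying a degree-$n$ relation by $x$ already calls on the unknown moment $S_{2n+1}$. (Relatedly, the inclusion is only dimensionally meaningful against $M(n-1)$; since for $M(n)\succeq 0$ the column space of its top $np$ block rows equals $\cC(M(n-1))$, this is the correct reading of condition (3).)

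The genuine gap is in $(3)\Rightarrow(2)$, where the two steps you yourself flag as ``delicate'' are precisely the content of the theorem and are asserted rather than proved. First, the claim that the range hypothesis ``lets me select $S_{2n+1}\in\Sym_p(\RR)$'': the inclusion produces a matrix $W$ with $\mathbf{v}_{n+1}^{(n-1)}$ expressed through the top $np$ block rows of $M(n)$, and flatness then \emph{forces} $S_{2n+1}$ to be the bottom block row of $M(n)$ applied to $W$ --- but the symmetry of this $p\times p$ block is not automatic. For $p=1$ it is vacuous, which is exactly why the scalar argument is easy; in the block case one must either exploit the freedom in $W$ modulo $\Ker M(n)$ together with the Hankel symmetry to symmetrize, or invoke the extension theory for nonnegative block Hankel matrices (this is the substance of \cite{Bol96} and of the completion machinery behind \cite{BW11}). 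Without it the flat extension $M(n+1)$ need not exist and the rest of your argument has nothing to propagate. Second, extracting the measure: that the monic matrix polynomial coming from the column relation ``has only real roots,'' that the masses obtained from the block Vandermonde system are positive semidefinite, and that $\sum_j\Rank A_j=\Rank M(n)$, none of this follows from the recurrence alone; it requires the positivity of $M(n)$, e.g.\ via a self-adjoint extension of the multiplication operator on the $r$-dimensional Hilbert space obtained from $(\RR^{(n+1)p},M(n))$ modulo its kernel, or via the zero theory of orthogonal matrix polynomials. (Also, the uniqueness and existence of flat extensions ``of every order'' for block Hankel matrices is itself a theorem in the matrix setting, not a remark.) So the plan is the right one, but as written it is an outline whose decisive steps --- precisely the matrix-specific ones --- are missing.
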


\begin{remark}
The truncated matrix Hamburger moment problem was also considered in 
\cite{Bol96,Dym89,DFKMT09}.
\end{remark}

	\subsection{Support of the representing measure} 
    \label{sec:support-of-the-measure}
		Given a matrix polynomial $P(x)=\sum_{i=0}^n x^i P_i \in M_p(\RR[x])$,
		we define the \textbf{evaluation} $P(X)$
        on the moment matrix $M(n)$ (see \eqref{def:moment-matrix}) to be a matrix, obtained by 
		replacing each monomial of $P$ by the corresponding column of $M(n)$ and multiplying 
		with the matrix coefficients $P_i$ from the right, i.e., 
		$$P(X)
		:=
		\sum_{i=0}^n X^i P_i
		{=
			\mathbf{v}_{0}^{(n)} P_0+
			\mathbf{v}_{1}^{(n)} P_1+
			\cdots+
			\mathbf{v}_{n}^{(n)} P_n
		}
		\in M_{(n+1)p,p}(\RR),$$
        where $\mathbf{v}_{i}^{(j)}$
        are as in \eqref{def:columns-mm} above.
		If $P(X)=\mathbf{0}_{(n+1)p,p}$, then $P$ is a \textbf{block column relation} of $M(n)$.

        The following lemma connects $\supp(\mu)$
        of a representing measure $\mu$ for 
        $\cS$ as in \eqref{def:sequence}
        with a block column relation of $M(n)$.

        \begin{lemma}[{\cite[Lemma 5.53]{KT22}}]
        \label{lemma:atoms}
             Let $n, p \in \NN$ and
        $$
        \mathcal S\equiv \mathcal S^{(2n)}
        =(S_0,S_1,\ldots,S_{2n})\in (\Sym_p)^{2n+1}
        $$
        be a given sequence with a representing measure $\mu$.
        If $P(x)=\sum_{i=0}^n x^i P_i \in M_p(\RR[x])$ is a block column relation of $M(n)$, then
     	\begin{equation*}
			\supp(\mu) \subseteq \cZ(\det P(x)).
		\end{equation*}
        \end{lemma}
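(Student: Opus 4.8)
The plan is to turn the purely algebraic hypothesis $P(X)=\mathbf 0_{(n+1)p,p}$ into an analytic identity about $\mu$ and then exploit positivity. First I would read off the block rows of the relation $P(X)=\sum_{i=0}^n \mathbf v_i^{(n)}P_i=\mathbf 0$: the $r$-th block row, $r=0,1,\ldots,n$, is $\sum_{i=0}^n S_{i+r}P_i=\mathbf 0_p$. Substituting $S_k=\int_\RR x^k\,d\mu$ and moving the constant matrix coefficients inside the matrix integral (all integrands have degree $\le 2n$, so the relevant moments exist), each such row becomes $\int_\RR x^r\,d\mu(x)\,P(x)=\mathbf 0_p$, where $d\mu(x)\,P(x)$ denotes right multiplication of the matrix measure by $P(x)$.

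Next I would form the two-sided congruence. Multiplying the $r$-th identity on the left by $P_r^{T}$ and summing over $r$ gives
\[
\int_\RR P(x)^{T}\,d\mu(x)\,P(x)=\sum_{r=0}^n P_r^{T}\Big(\int_\RR x^r\,d\mu(x)\,P(x)\Big)=\mathbf 0_p .
\]
The key structural point is that $\Delta\mapsto \nu(\Delta):=\int_\Delta P(x)^{T}\,d\mu(x)\,P(x)$ is itself a positive $\Sym_p(\RR)$-valued measure: for every $v\in\RR^p$ one has $v^{T}\nu(\Delta)v=\int_\Delta (P(x)v)^{T}\,d\mu(x)\,(P(x)v)\ge 0$ since $\mu\succeq 0$. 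Because $\nu(\RR)=\mathbf 0_p$ and $\nu(\Delta)+\nu(\RR\setminus\Delta)=\nu(\RR)=\mathbf 0_p$ with both summands positive semidefinite, each $\nu(\Delta)$ is simultaneously psd and the negative of a psd matrix, hence $\nu(\Delta)=\mathbf 0_p$ for every Borel set $\Delta$.

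Finally I would localize to the support. For the finitely atomic case $\mu=\sum_j A_j\delta_{x_j}$ that is central to this paper, taking $\Delta=\{x_j\}$ gives $P(x_j)^{T}A_jP(x_j)=\mathbf 0_p$, i.e.\ $(A_j^{1/2}P(x_j))^{T}(A_j^{1/2}P(x_j))=0$, so $A_j^{1/2}P(x_j)=0$. Since $A_j\ne 0$, the subspace $\Ker A_j^{1/2}$ is proper and contains every column of $P(x_j)$; hence $\Rank P(x_j)<p$ and $\det P(x_j)=0$, giving $x_j\in\cZ(\det P(x))$. For a general (not necessarily atomic) representing measure I would argue by contradiction: if some $x_0\in\supp\mu$ had $\det P(x_0)\ne 0$, then $P$ is invertible on a neighborhood $U$ of $x_0$; writing $d\mu=W\,d\tau$ with $\tau=\tr\mu$ and $\tr W=1$ $\tau$-a.e., the vanishing of $\nu$ on $U$ forces $\tr(P^{T}WP)=0$ and therefore $W=0$ $\tau$-a.e.\ on $U$, contradicting $\tr W=1$ and $\tau(U)>0$.

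The main obstacle I anticipate is the bookkeeping in the non-commutative, matrix-valued setting: one must keep the multiplication orders consistent (right multiplication by $P(x)$, two-sided congruence $P(x)^{T}(\cdot)P(x)$) and verify carefully that $\nu$ is genuinely a \emph{positive} matrix measure. Passing from the single vanishing identity $\nu(\RR)=\mathbf 0_p$ to the pointwise conclusion on the support is immediate for atoms but requires the Radon--Nikodym density argument above in the general case, and this is the only genuinely analytic step.
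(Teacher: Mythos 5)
Your proof is correct. Note first that the paper does not prove this lemma at all: it is imported verbatim as \cite[Lemma 5.53]{KT22}, so there is no in-paper argument to compare against; what you have produced is a self-contained substitute for the citation. Your route is the natural one and, to my knowledge, essentially the standard one: the block rows of $P(X)=\mathbf 0_{(n+1)p,p}$ give $\sum_{i=0}^n S_{i+r}P_i=\mathbf 0_p$ for $r=0,\dots,n$, the two-sided congruence yields $\int_\RR P(x)^T\,d\mu(x)\,P(x)=\mathbf 0_p$, and positivity of $\mu$ upgrades this single identity to $\nu(\Delta)=\mathbf 0_p$ for \emph{every} Borel $\Delta$ --- the decisive step, since a psd-valued measure with total mass zero vanishes identically. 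The atomic localization $P(x_j)^TA_jP(x_j)=\mathbf 0_p\Rightarrow A_j^{1/2}P(x_j)=0\Rightarrow\det P(x_j)=0$ is exactly what the paper needs (all measures appearing there are finitely atomic), and your extra Radon--Nikodym argument makes the lemma valid for arbitrary representing measures, which is slightly more than the paper uses. Two small points you should make explicit if you write this up: (i) the absolute continuity $\mu_{ij}\ll\tau$ needed for $d\mu=W\,d\tau$ follows from positivity of the $2\times 2$ compressions, e.g.\ $\lvert\mu_{ij}(\Delta)\rvert\le\tfrac12\bigl(\mu_{ii}(\Delta)+\mu_{jj}(\Delta)\bigr)\le\tau(\Delta)$; and (ii) in the non-atomic step it is the vanishing of $\nu$ on all Borel subsets of $U$ (not merely $\nu(U)=\mathbf 0_p$) that forces $P^TWP=\mathbf 0_p$ $\tau$-a.e.\ on $U$, after which invertibility of $P$ on $U$ gives $W=0$ $\tau$-a.e.\ there, contradicting $\operatorname{tr}W=1$ and $\tau(U)>0$. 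Neither point is a gap, only a line of justification each.
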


		\subsection{Evaluation at $\infty$}
        \label{subsec:evaluation-at-infty}
		We recall the definition of the evaluation at $\infty$ from \cite[Definition 1.1]{BKRSV20}. The \textbf{evaluation at $\infty$} is the linear functional $\mathbf{ev}_{\infty}:\RR[x]_{\leq 2n} \to \RR$, defined by
		\begin{equation}
			\label{eq:evalAtInfty}
			\mathbf{ev}_{\infty}\left (\sum_{i=0}^{2n}a_ix^i\right ) = a_{2n}.
		\end{equation}
        Let $L$ be as in \eqref{def:linear-mapping}.
		We say $\mu$ is a \textbf{finitely atomic $(\RR\cup \{\infty\})$--representing measure for $L$}  if it is of the form 
		$$\mu=\sum_{j=1}^\ell \delta_{x_j}A_j + \mathbf{ev}_{\infty}A,$$ 
		where $\ell\in \NN$, $x_j\in \RR$, and $A_j\in\Sym_p^{\succeq 0}(\RR)$, $A\in \Sym_p^{\succeq 0}(\RR)$. If $\sum_{j=1}^{\ell} \Rank A_j + \Rank A = r$, we say that $\mu$ is an \textbf{$r$--atomic $(\RR \cup \infty)$--representing measure} for $L$.

	\section{Proofs of Theorem \ref{th:mainTheorem}
and Corollaries \ref{co:corollary-atom-avoidance}, \ref{co:Simonov-pd}
} 
	\label{sec:posDef}

  In the proof of Theorem \ref{th:mainTheorem} we will need 
		{the following lemma on the determinant of a matrix polynomial.

		\begin{lemma}
			\label{le:determinant-of-matrix-valued-polynomial-v1}
			Let \(p,n \in \mathbb{N}\), \(t \in \mathbb{R}\), and let
			$$H(x) = (x-t)\sum_{i=0}^{n}x^i H_i + P_0$$
			be a nonzero matrix polynomial, where \(H_i,P_0 \in M_p(\mathbb{R})\). 
			We define
			$$
			m := \dim \Ker P_0
			\qquad \text{and} \qquad 
			s := \dim \Big(\Ker P_0 \bigcap \bigcap_{i=0}^{n}\Ker H_i \Big).
			$$
			Then
			\begin{equation}
				\label{determinant-H-v1}
				\det H(x) 
				= 
				\left\{
				\begin{array}{rl}
					(x - t)^{m} g(x),&
					\text{if }s=0,\\[0.3em]
					0,&
					\text{if }s>0,
				\end{array}
				\right.
			\end{equation}
			where $0\neq g(x) \in \mathbb{R}[x]$.
		\end{lemma}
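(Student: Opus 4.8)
The plan is to dispose of the degenerate case $s>0$ first, then prove divisibility by $(x-t)^m$ in general, and finally isolate the non-vanishing of $g$ as the one substantial point.

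If $s>0$, pick a nonzero $v\in\Ker P_0\cap\bigcap_{i=0}^n\Ker H_i$. Then $P_0v=0$ and $\big(\sum_{i=0}^n x^iH_i\big)v=0$ for every $x$, so $H(x)v=(x-t)\big(\sum_{i=0}^n x^iH_i\big)v+P_0v\equiv 0$. Hence the columns of $H(x)$ are linearly dependent over $\RR(x)$ and $\det H(x)\equiv 0$, as claimed.

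For the divisibility, which holds in both cases, write $G(x):=\sum_{i=0}^n x^iH_i$, so that $H(x)=P_0+(x-t)G(x)$ and $H(t)=P_0$. Since $\Rank P_0=p-m$, there are invertible constant matrices $U,V\in M_p(\RR)$ with $UP_0V=\left(\begin{smallmatrix}I_{p-m}&0\\0&\mathbf{0}_m\end{smallmatrix}\right)$. Then $UH(x)V=\left(\begin{smallmatrix}I_{p-m}&0\\0&\mathbf{0}_m\end{smallmatrix}\right)+(x-t)\,UG(x)V$, so the bottom $m$ rows of $UH(x)V$ are each divisible by $(x-t)$. Factoring out one copy of $(x-t)$ from each of these rows gives $\det\big(UH(x)V\big)=(x-t)^m q(x)$ for some $q\in\RR[x]$, and since $\det U$ and $\det V$ are nonzero constants, $\det H(x)=(x-t)^m g(x)$ with $g:=(\det U\det V)^{-1}q$. (Equivalently, the order of vanishing of $\det H$ at $t$ is at least $\dim\Ker H(t)=m$.) It remains only to decide when $g\equiv 0$.

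The crux is to show $g\not\equiv 0$, i.e.\ $\det H\not\equiv 0$, under the hypothesis $s=0$; I would argue by contradiction. If $\det H\equiv 0$, the columns of $H(x)$ are dependent over $\RR(x)$, so there is a nonzero $v(x)\in\RR[x]^p$ with $H(x)v(x)=0$; take one of minimal degree. Evaluating at $x=t$ gives $P_0v(t)=0$, and $v(t)\neq 0$ (otherwise $(x-t)\mid v(x)$, and dividing out the factor would produce a null vector of smaller degree), so $0\neq v(t)\in\Ker P_0$. Writing $v(x)=v(t)+(x-t)u(x)$, cancelling one factor of $(x-t)$ from $H(x)v(x)=0$, and evaluating at $t$ yields $G(t)v(t)\in\cC(P_0)$. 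Thus $\det H\equiv 0$ forces a nonzero $v\in\Ker P_0$ with $G(t)v\in\cC(P_0)$, and the main obstacle is to rule this out. The condition $s=0$ only gives $\Ker P_0\cap\bigcap_i\Ker H_i=\{0\}$, so the key step is to leverage this (and, where needed, the finer structure of $H$ supplied by the moment construction) to exclude such a $v$; I expect this upgrade — from a polynomial null vector of $H$ to a constant common null vector of $P_0$ and all the $H_i$ — to be the only genuinely delicate point, the remaining manipulations being routine.
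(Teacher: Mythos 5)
Your handling of the case $s>0$ and of the divisibility $(x-t)^m \mid \det H(x)$ is correct and essentially the paper's own argument: the paper does the same normalization one-sidedly, choosing a basis matrix $B$ whose first $m$ columns span $\Ker P_0$ and factoring $(x-t)$ out of the first $m$ columns of $H(x)B$, while you reduce two-sidedly via $UP_0V = I_{p-m}\oplus \mathbf{0}_m$ and extract $(x-t)$ from the bottom $m$ rows; these are the same computation. Up to this point the two proofs match, and your parenthetical reformulation (order of vanishing of $\det H$ at $t$ is at least $\dim\Ker H(t)$) is also fine.

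The step you could not close --- deducing $g\not\equiv 0$ from $s=0$ --- is indeed the gap in your proposal, but your suspicion about it is vindicated in the strongest sense: that implication is false, so no argument can close it. Take $p=2$, $n=1$, $t=0$, $H_1=\mathbf{0}_2$, $H_0=\left(\begin{smallmatrix}0&1\\0&0\end{smallmatrix}\right)$, $P_0=\left(\begin{smallmatrix}1&0\\0&0\end{smallmatrix}\right)$. Then $\Ker P_0=\Span\{e_2\}$, $\Ker H_0=\Span\{e_1\}$, $\Ker H_1=\RR^2$, so $s=0$ and $m=1$, yet $H(x)=\left(\begin{smallmatrix}1&x\\0&0\end{smallmatrix}\right)$ has $\det H\equiv 0$. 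The nonconstant null vector $v(x)=(-x,1)^T$ is exactly the object your contradiction argument runs into: $v(0)=e_2\in\Ker P_0$ and $G(0)v(0)=e_1\in\cC(P_0)$, and $s=0$ does not exclude it --- rightly so, since it exists. Note that the paper's own proof of this step is the single unargued sentence ``Since $s=0$, $g(x)\neq 0$ also holds,'' which the example above refutes; the lemma as stated is incorrect, though harmlessly so for the paper, because in its only application the lemma is invoked for the polynomial $H(x)$ of \eqref{gen-poly}, which is monic ($H_n=I_p$), so that $\det H(x)$ is monic of degree $(n+1)p$ and in particular nonzero, and the divisibility step you already proved gives everything needed. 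Your closing remark that one must leverage ``the finer structure of $H$ supplied by the moment construction'' is exactly the right fix: add the hypothesis that the leading coefficient $H_n$ is invertible (or directly that $\det H\not\equiv 0$), and the lemma, with your proof, is complete.
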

		
		\begin{proof}
			Clearly, if $s>0$, there exists a nonzero vector $v\in \RR^p$ such that $H(x)v=\mathbf 0_{p,1}$, which implies 
			\eqref{determinant-H-v1}. From now on we assume that $s=0$.
			Let
			$\mathcal{B} := \{b_1, b_2, \ldots, b_m,b_{m+1}, \ldots, b_p\}$ be a basis of $\RR^p$ 
			such that the set 
			$\{b_1, b_2, \ldots, b_m\}$ is a basis of $\Ker P_0$. 
			Let us define an invertible matrix
			$$
			B := \begin{pmatrix}
				b_1 & b_2 & \cdots & b_p
			\end{pmatrix} \in M_p(\RR).
			$$
			For $i=0, 1, \ldots, n$ define matrices
			\begin{equation}
				\label{structure-of-widetildeHi-v1}
				\widetilde{H}_i \equiv
				\big(\begin{array}{ccccccc}
					\widetilde{h}_i^{(1)} & \widetilde{h}_i^{(2)} & \cdots &
					\widetilde{h}_i^{(m)} & 
					\cdots &          
					\widetilde{h}_i^{(p)}
				\end{array}\big)
				:= H_iB
			\end{equation}
			and let
			\begin{equation}
				\label{structure-of-widetildeP0-v1}
				\widetilde{P}_0 \equiv
				\big(\begin{array}{ccccccc}
					\smash{\block{m}} &
					\widetilde{p}_0^{(m+1)} & 
					\cdots &          
					\widetilde{p}_0^{(p)}
				\end{array}\big)
				:= P_0B,
			\end{equation}
			\smallskip
			
			\noindent where the last equality follows by 
			$b_1, b_2, \ldots, b_m\in \Ker P_0$.
			Define a matrix polynomial
			$$\widetilde{H}(x) := (x-t)\sum_{i=0}^{n}x^i\widetilde{H}_i + \widetilde{P}_0 = H(x)B \in M_p(\RR[x]).$$
			By \eqref{structure-of-widetildeHi-v1} and \eqref{structure-of-widetildeP0-v1}, it follows that the first $m$ columns of $\widetilde{H}(x)$
			are of the form
			$$
			(x-t)\sum_{i=0}^{n}x^i
			\begin{pmatrix}
				\widetilde{h}_i^{(1)} &   
				\widetilde{h}_i^{(2)} & \cdots & 
				\widetilde{h}_i^{(m)}
			\end{pmatrix},
			$$
			while the last $p-m$ columns of
			$\widetilde H(x)$ are equal to
			$$
			(x-t)\sum_{i=0}^{n}x^i
			\begin{pmatrix}
				\widetilde{h}_i^{(m+1)} &   
				\widetilde{h}_i^{(m+2)} & \cdots & 
				\widetilde{h}_i^{(p)}
			\end{pmatrix} + \begin{pmatrix}
				\widetilde{p}_0^{(m+1)} &   
				\widetilde{p}_0^{(m+2)} & \cdots & 
				\widetilde{p}_0^{(p)}
			\end{pmatrix}
			$$
			Observe that the first $m$ columns of $\widetilde{H}(x)$ have a common factor $(x-t)$.
			Using this observation and upon
			factoring the determinant of 
			$\widetilde{H}(x)$
			column-wise we obtain 
			\begin{align*} 
				\det H(x) = \frac{\det \widetilde{H}(x)}{\det B} = (x-t)^mg(x),
			\end{align*}
			which proves \eqref{determinant-H-v1}. Since $s=0$, $g(x) \neq 0$ also holds.
		\end{proof}

	\begin{proof}[Proof of Theorem \ref{th:mainTheorem}]
            Let 
        $\mathbf{v}_i^{(j)}$ be as in 
            \eqref{def:columns-mm} 
        and 
        ${((x-t)\cdot\mathbf{v})}_{i}^{(j)}$
        as in 
            \eqref{def:columns-loc-mm}
        for $f(x)=x-t$.
        Note that $\cH$ from the statement of the theorem is equal
        to $\mathcal{H}_{x-t}(n-1)$.
            First we establish the following claim.\\

            \noindent \textbf{Claim.}
                $\Rank \begin{pmatrix}
				    \mathbf{v}_0^{(n)} & \begin{array}{c}
					\mathcal{H}_{x-t}(n-1) \\
					{\big({((x-t)\cdot\mathbf{v})}_{n}^{(n-1)}\big)}^T
				\end{array}
			\end{pmatrix}
                =(n+1)p.
                $\\

            \noindent\textit{Proof of Claim.}
            	We have 
		\begin{align*}
			&\Rank \begin{pmatrix}
				    \mathbf{v}_0^{(n)} & \begin{array}{c}
					\mathcal{H}_{x-t}(n-1) \\
					{\big({((x-t)\cdot\mathbf{v})}_{n}^{(n-1)}\big)}^T
				\end{array}
			\end{pmatrix} \\
                =&
                \Rank 
                    \begin{pmatrix}
				    \mathbf{v}_0^{(n)} & 
                        \mathbf{v}_1^{(n)}-t\mathbf{v}_0^{(n)} &
                        \mathbf{v}_2^{(n)}-t\mathbf{v}_1^{(n)} &
                        \cdots &
                        \mathbf{v}_n^{(n)}-t\mathbf{v}_{n-1}^{(n)}
			      \end{pmatrix} \\
                          =&
                \Rank 
                    \begin{pmatrix}
				    \mathbf{v}_0^{(n)} & 
                        \mathbf{v}_1^{(n)}&
                        \mathbf{v}_2^{(n)}&
                        \cdots &
                        \mathbf{v}_n^{(n)}
			      \end{pmatrix} \\        
                =&\Rank M(n) = (n+1)p,
		\end{align*}
            where we used that $M(n)$ is positive definite in the last equality.
            \hfill$\blacksquare$\\
            
		Next we prove the implication 
		$\eqref{th:mainTheorem-pt1} \Rightarrow \eqref{th:mainTheorem-pt2}.$
		Let $\mu=\sum_{j=1}^\ell \delta_{x_j}A_j$ be a {representing measure}  
		for $L$ 
		such that the atoms $x_i\in \RR$ are pairwise distinct, $A_i\in \Sym_p^{\succeq 0}(\RR)$,
		$x_1=t$, 
		$\Rank A_1=m$ and
		$\sum_{j=1}^\ell \Rank A_j = (n+1)p$.  
		We compute $S_{2n+1}$ with respect to the measure $\mu$, i.e., $S_{2n+1} := \int_{\RR} x^{2n+1}\; d\mu = \sum_{j=1}^\ell x_j^{2n+1}A_j$.
	By the Claim,
		\begin{align}
			\label{rank-H-K-equals-np}
			np 
			= 
			\Rank \begin{pmatrix}
				\mathcal{H}_{x-t}(n-1) \\
				{\big({((x-t)\cdot\mathbf{v})}_{n}^{(n-1)}\big)}^T
			\end{pmatrix}
			=
			\Rank \begin{pmatrix}
				\mathcal{H}_{x-t}(n-1) & 
                    {((x-t)\cdot\mathbf{v})}_{n}^{(n-1)}
			\end{pmatrix}.
		\end{align}
		Define $\widetilde{\mu}:=\sum_{j=2}^\ell\delta_{x_j}A_j$.
		Note that 
		$\mu=\delta_t A_1+\widetilde{\mu}$
		and 
		$$
		\sum_{j=2}^\ell \Rank A_j = 
		\sum_{j=1}^\ell \Rank A_j -\Rank A_1=
		(n+1)p - m.
		$$
		Let $\widetilde{L}:\RR[x]\to \Sym_p(\RR)$ be a linear operator, defined by 
		\begin{align*}
			\widetilde{L}(x^i) \equiv \widetilde{S}_i := \int_{\RR} x^i\; d\widetilde{\mu} \quad \text{for } i\in \NN\cup\{0\}.
		\end{align*}
        Let $\widetilde{\mathcal{S}}:=(\widetilde{S}_0,\widetilde S_1,\ldots,\widetilde{S}_{2n+2})$.
		We have
		\begin{align}
			\label{rank-inequalities}
			\begin{split}
				(n+1)p-m=\Rank M(n)-m
				&\leq \Rank{M}_{\widetilde{\cS}}(n)\leq \Rank M_{\widetilde{\cS}}(n+1),\\
				\Rank M_{\widetilde{\cS}}(n+1) &\leq \sum_{j=2}^\ell \Rank A_j = (n+1)p - m,
			\end{split}
		\end{align}
		{where the first inequality follows
			from the fact that the difference
			$M(n)-M_{\widetilde{\cS}}(n)$ is a sum 
			of $m$ matrices of rank 1.
			The inequalities
			\eqref{rank-inequalities} imply 
			that 
			$
			(n+1)p-m
			\leq M_{\widetilde{\cS}}(n+1)
			\leq 
			(n+1)p-m,
			$
			whence all inequalities in \eqref{rank-inequalities} must be equalities.
			In particular,}
		\begin{align}
			\label{rank-equality-of-widetildeMn}
			\Rank M_{\widetilde{\cS}}(n) = \Rank M_{\widetilde{\cS}}(n+1) = (n+1)p - m.
		\end{align}
		For every $i\in \NN\cup \{0\}$ we have
		\begin{align}    
			\label{localizing-moments}
			\begin{split}
				\widetilde{S}_{i+1} - t\widetilde{S}_i &= \int_{\RR} (x^{i+1}-tx^i)\; d\widetilde{\mu} \\
				&= \int_{\RR} (x^{i+1}-tx^i)\; d(\widetilde{\mu}+\delta_tA_1) \\ 
				&=\int_{\RR} (x^{i+1}-tx^i)\; d\mu \\
				&= S_{i+1} - tS_i.
			\end{split}
		\end{align}
		Let $\widetilde{\mathcal{H}}_{x-t}(n-1)$ be the $(x-t)$--localizing moment matrix of $\widetilde{L}$, 
        \begin{equation*}
    {\widetilde{\mathbf{v}}}_i^{(n)}
	=\begin{pmatrix}\widetilde S_{i+r-1}\end{pmatrix}_{r=1}^{n+1}
	=\begin{pmatrix}
		\widetilde S_{i} \\ 
            \widetilde S_{i+1} \\ 
            \vdots \\ 
            \widetilde S_{i+n}
 	\end{pmatrix}
    \quad
    \text{and}
    \quad
        {((x-t)\cdot \widetilde{\mathbf{v}})}_n^{(n-1)}
        :=
        \begin{pmatrix}
		\widetilde{S}_{n+1} - t\widetilde{S}_n\\
            \widetilde{S}_{n+2} - t\widetilde{S}_{n+1} \\ 
            \vdots \\ 
            \widetilde{S}_{2n} - t\widetilde{S}_{2n-1}
 	\end{pmatrix}.
\end{equation*}
		By \eqref{localizing-moments}, it follows that
		\begin{equation}
        \label{eq:localizing-mat}
        \widetilde{\mathcal{H}}_{x-t}(n-1) = \mathcal{H}_{x-t}(n-1)
            \quad\text{and}\quad
            {((x-t)\cdot \widetilde{\mathbf{v}})}_n^{(n-1)}
            =
            {((x-t)\cdot {\mathbf{v}})}_n^{(n-1)}
            .
        \end{equation}
		Hence,
		\begin{align}
			\label{rank-equality-implication-1-to-2}
                \begin{split}
                \Rank \begin{pmatrix}
				    {\widetilde{\mathbf{v}}}_0^{(n)} & \begin{array}{c}
					\mathcal{H}_{x-t}(n-1) \\
					{\big({((x-t)\cdot\mathbf{v})}_{n}^{(n-1)}\big)}^T
				\end{array}
			\end{pmatrix}
                &= 
			\Rank \begin{pmatrix}
				    {\widetilde{\mathbf{v}}}_0^{(n)} & \begin{array}{c}
					{\widetilde{\mathcal{H}}}_{x-t}(n-1) \\
					{\big({((x-t)\cdot\widetilde{\mathbf{v}})}_{n}^{(n-1)}\big)}^T
				\end{array}
			\end{pmatrix} \\
                &= \Rank M_{\widetilde{\cS}}(n) 
                \underbrace{=}_{\eqref{rank-equality-of-widetildeMn}} np +(p-m).
                \end{split}
		\end{align}
		It follows from \eqref{rank-H-K-equals-np} and \eqref{rank-equality-implication-1-to-2} that 
            \begin{align}
            \label{eq:rank-15}
            \begin{split}
	           &\Rank \begin{pmatrix}
				    {\widetilde{\mathbf{v}}}_0^{(n)} & \begin{array}{c}
					\mathcal{H}_{x-t}(n-1) \\
					{\big({((x-t)\cdot\mathbf{v})}_{n}^{(n-1)}\big)}^T
				\end{array}
			\end{pmatrix}\\
            =& \Rank \begin{pmatrix} \widetilde a_1 & \widetilde a_2 & \cdots & \widetilde a_{p-m} & 	\begin{array}{c}
					\mathcal{H}_{x-t}(n-1) \\
					{\big({((x-t)\cdot\mathbf{v})}_{n}^{(n-1)}\big)}^T
				\end{array}
			\end{pmatrix},
            \end{split}
		\end{align}
		where $\widetilde a_1, \widetilde a_2, \ldots, \widetilde a_{p-m}$ are $p-m$ columns of the block ${\widetilde{\mathbf{v}}}_0^{(n)}$.
		We have
		\begin{align}
			\label{rank-equality-implication-1-to-2-v2}
                \begin{split}
	          & \Rank \begin{pmatrix}
				    {\widetilde{\mathbf{v}}}_0^{(n)} & \begin{array}{c}
					\mathcal{H}_{x-t}(n-1) \\
					{\big({((x-t)\cdot\mathbf{v})}_{n}^{(n-1)}\big)}^T
				\end{array}
			\end{pmatrix}\\
                \underbrace{=}_{\eqref{rank-equality-implication-1-to-2}}&
                \Rank \begin{pmatrix}
				    {\widetilde{\mathbf{v}}}_0^{(n)} & \begin{array}{c}
					{\widetilde{\mathcal{H}}}_{x-t}(n-1) \\
					{\big({((x-t)\cdot\widetilde{\mathbf{v}})}_{n}^{(n-1)}\big)}^T
				\end{array}
			\end{pmatrix} \\
                          =&
                \Rank 
                    \begin{pmatrix}
				    \widetilde{\mathbf{v}}_0^{(n)} & 
                        \widetilde{\mathbf{v}}_1^{(n)}-t\widetilde{\mathbf{v}}_0^{(n)} &
                        \widetilde{\mathbf{v}}_2^{(n)}-t\widetilde{\mathbf{v}}_1^{(n)} &
                        \cdots &
                        \widetilde{\mathbf{v}}_n^{(n)}-t\widetilde{\mathbf{v}}_{n-1}^{(n)}
			      \end{pmatrix} \\
                          =&
                \Rank 
                    \begin{pmatrix}
				    \widetilde{\mathbf{v}}_0^{(n)} & 
                        \widetilde{\mathbf{v}}_1^{(n)}&
                        \widetilde{\mathbf{v}}_2^{(n)}&
                        \cdots &
                        \widetilde{\mathbf{v}}_n^{(n)}
			      \end{pmatrix} 
                               \\
                \underbrace{=}_{\eqref{rank-equality-of-widetildeMn}}&
                \Rank 
                    \begin{pmatrix}
				    \widetilde{\mathbf{v}}_0^{(n)} & 
                        \widetilde{\mathbf{v}}_1^{(n)}&
                        \widetilde{\mathbf{v}}_2^{(n)}&
                        \cdots &
                        \widetilde{\mathbf{v}}_n^{(n)} &
                        \widetilde{\mathbf{v}}_{n+1}^{(n)}
			      \end{pmatrix} 
                               \\
                =&\Rank 
                    \begin{pmatrix}
				    \widetilde{\mathbf{v}}_0^{(n)} & 
                        \widetilde{\mathbf{v}}_1^{(n)}-t\widetilde{\mathbf{v}}_0^{(n)} &
                        \widetilde{\mathbf{v}}_2^{(n)}-t\widetilde{\mathbf{v}}_1^{(n)} &
                        \cdots &
                        \widetilde{\mathbf{v}}_{n+1}^{(n)}-t\widetilde{\mathbf{v}}_{n}^{(n)}
			      \end{pmatrix} \\
                =&
                \Rank \begin{pmatrix}
				    {\widetilde{\mathbf{v}}}_0^{(n)} &
					\widetilde{\mathcal{H}}_{x-t}(n) 
			\end{pmatrix}\\
                \underbrace{=}_{\eqref{eq:localizing-mat}}&
                \Rank \begin{pmatrix}
				    {\widetilde{\mathbf{v}}}_0^{(n)} &
					\cH_{x-t}(n) 
			\end{pmatrix}.
                  \end{split}
		\end{align}
		Write
		\begin{align*}
			{((x-t)\cdot\mathbf{v})}_{n}^{(n)}
                =: \begin{pmatrix}
				k_1 & k_2 & \cdots & k_{p}
			\end{pmatrix}.
		\end{align*}
		By \eqref{eq:rank-15} and \eqref{rank-equality-implication-1-to-2-v2}, for every $j=1,2,\ldots,p$, there exist $\alpha_1^{(j)}, \alpha_2^{(j)}, \ldots, \alpha_{p-m}^{(j)} \in \RR$ and $v_j \in M_{np,1}(\RR)$, such that
		\begin{align*}
			k_j = \alpha_1^{(j)}\widetilde a_1 + \alpha_2^{(j)}\widetilde a_2 + \ldots + \alpha_{p-m}^{(j)}\widetilde a_{p-m} + \begin{pmatrix}
				\mathcal{H}_{x-t}(n-1) \\
				{\big({((x-t)\cdot\mathbf{v})}_{n}^{(n-1)}\big)}^T
			\end{pmatrix}v_j.
		\end{align*}
		Hence, we have
		\begin{align*}
                \cH_{x-t}(n)
                = \begin{pmatrix} 
				\widetilde a_1 & \widetilde a_2 & \cdots & \widetilde a_{p-m} & 	\begin{array}{c}
					\cH_{x-t}(n-1) \\
					{\big({((x-t)\cdot\mathbf{v})}_{n}^{(n-1)}\big)}^T
				\end{array}
			\end{pmatrix} \begin{pmatrix}
				\mathbf{0}_{p-m,np} & W \\
				I_{np} & V
			\end{pmatrix},
		\end{align*}
		where 
		\begin{align*}
			W := \begin{pmatrix}
				\alpha_1^{(1)} & \alpha_1^{(2)} & \cdots & \alpha_1^{(p)} \\
				\alpha_2^{(1)} & \alpha_2^{(2)} & \cdots & \alpha_2^{(p)} \\
				\vdots & \vdots & \ddots & \vdots \\
				\alpha_{p-m}^{(1)} & \alpha_{p-m}^{(2)} & \cdots & \alpha_{p-m}^{(p)}
			\end{pmatrix} \quad \text{and} \quad V := \begin{pmatrix}
				v_1 & v_2 & \cdots & v_p
			\end{pmatrix}.
		\end{align*}
		Writing $\widetilde a_j =: \begin{pmatrix}
			b_j \\
			c_j
		\end{pmatrix} \in \begin{pmatrix}
			M_{np,1}(\RR) \\
			M_{p,1}(\RR)
		\end{pmatrix}$ for each $j=1,2,\ldots,p-m$, we have
		\begin{align*}
			\begin{pmatrix}
				\mathcal{H}_{x-t}(n-1) & {{((x-t)\cdot\mathbf{v})}_{n}^{(n-1)}}
			\end{pmatrix} = \begin{pmatrix}
				b_1 & b_2 & \cdots & b_{p-m} & \mathcal{H}_{x-t}(n-1)
			\end{pmatrix} \begin{pmatrix}
				\mathbf{0}_{p-m, np} & W \\
				I_{np} & V
			\end{pmatrix}
		\end{align*}
		and hence,
		\begin{align}
			\label{rank-inequality-computation}
			\begin{split}
				&\Rank \begin{pmatrix}
					\mathcal{H}_{x-t}(n-1) &
                        {{((x-t)\cdot\mathbf{v})}_{n}^{(n-1)}}
				\end{pmatrix} \\ 
				=& \Rank \begin{pmatrix}
					\mathcal{H}_{x-t}(n-1) & \begin{pmatrix}
						b_1 & b_2 & \cdots & b_{p-m}
					\end{pmatrix}W + \mathcal{H}_{x-t}(n-1)V
				\end{pmatrix} \\
				=& \Rank \begin{pmatrix}
					\mathcal{H}_{x-t}(n-1) & \begin{pmatrix}
						b_1 & b_2 & \cdots & b_{p-m}
					\end{pmatrix}W
				\end{pmatrix} \\
				\leq& \Rank \mathcal{H}_{x-t}(n-1) + \Rank \begin{pmatrix}
					b_1 & b_2 & \cdots & b_{p-m}
				\end{pmatrix}W \\
			     \leq& \Rank \mathcal{H}_{x-t}(n-1) + p-m.
			\end{split}
		\end{align}
		Using \eqref{rank-H-K-equals-np} in 
		\eqref{rank-inequality-computation} concludes the proof of the implication 
		$\eqref{th:mainTheorem-pt1} \Rightarrow \eqref{th:mainTheorem-pt2}$.\\
		
		It remains to prove the implication \eqref{th:mainTheorem-pt2} $\Rightarrow$ \eqref{th:mainTheorem-pt1}.
		Let us first describe the main idea of the proof.
		The aim is to construct 
		a matrix $S_{2n+1} \in \Sym_p(\RR)$ such that 
		\begin{equation} 
			\label{condition-to-prove}
			\dim
			\Ker 
                \mathcal{H}_{x-t}(n)
			= m.
		\end{equation}
		It will then follow from 
		$$\Ker\begin{pmatrix} 
            \mathcal{H}_{x-t}(n-1) \\ 
            {\big({((x-t)\cdot\mathbf{v})}_{n}^{(n-1)}\big)}^T\end{pmatrix}=\{0\},$$
		that
		there are $m$ columns in 
            ${((x-t)\cdot\mathbf{v})}_{n}^{(n)}$,
         which are in the span of the other columns of
		$\cH_{x-t}(n)$. 
		By the Claim, the matrix
		\begin{equation}
			\label{def:matrix-loc-plus-1}
			\cM:=
                    \begin{pmatrix}
				\mathbf{v}_{0}^{(n)} &
				\begin{array}{c}
					\mathcal{H}_{x-t}(n-1) \\ 
                        {\big({((x-t)\cdot\mathbf{v})}_{n}^{(n-1)}\big)}^T
				\end{array}
			\end{pmatrix}
		\end{equation}
		is invertible.
		This fact and \eqref{condition-to-prove} will imply that there exists a
		polynomial 
		$$
		P_L(x) 
		= 
		(x - t)
		\Big(
		x^nI_p - 
		\sum_{i=0}^{n-1}x^iP_{i,L}
		\Big) - P_0,
		$$ 
		where   
		$P_{i,L}, P_0 \in M_p(\RR)$ and 
		$\Rank P_0 = p-m$, which is a block column relation of the matrix
		$
		M(n+1)=
        \begin{pmatrix} 
			M(n) & \mathbf{v}_{n+1}^{(n)}\\
            \big(\mathbf{v}_{n+1}^{(n)}\big)^T & S_{2n+2}
		\end{pmatrix},
		$
        where $S_{2n+2}$ is uniquely determined by $\Rank M(n)=\Rank M(n+1)$.
		Hence, \begin{equation} 
			\label{eq:determinant}
			\det P_L(x) = (x - t)^{m}g(x),
		\end{equation}
		for some polynomial $g(x) \in \RR[x]$ of degree $(n+1)p -m$
		{with $g(t)\neq 0$}.
		Thus, Theorem \ref{th:mainTheorem}.\eqref{th:mainTheorem-pt1} will follow from \eqref{eq:determinant}, 
        Theorem \ref{th:Hamburger-matricial} and Lemma \ref{lemma:atoms}. Moreover, the constructed measure $\mu$ will satisfy 
        $\mult_\mu t= m$.\\
		
		Let 
		\begin{align} 
			\label{rank-increasement}
                \begin{split}
			k 
                &:= 
			\Rank 
			\begin{pmatrix}
				\mathcal{H}_{x-t}(n-1) & {((x-t)\cdot\mathbf{v})}_{n}^{(n-1)}
			\end{pmatrix} 
			- 
			\Rank \mathcal{H}_{x-t}(n-1)\\
                &=
			np-\Rank\mathcal{H}_{x-t}(n-1),
                \end{split}
		\end{align}
            where we used that $\cM$ from \eqref{def:matrix-loc-plus-1} is invertible in the last equality.
		By assumption \eqref{th:mainTheorem-pt2}, we have
		$$np-\Rank\cH_{x-t}(n-1)+m=k+m\leq p,$$
		or equivalently 
		$$k\leq p-m.$$
		In order to simplify the technical structure of the proof, we make the following modification.
		We permute the columns of 
            ${((x-t)\cdot\mathbf{v})}_{n}^{(n-1)}$
		using a permutation matrix $P\in M_p(\RR)$
		to obtain a matrix 
		\begin{align}
			\label{def:widehatK}
                \begin{split}
			\widehat{((x-t)\cdot\mathbf{v})}_{n}^{(n-1)}
                &\equiv \big(
			\underbrace{\widehat{{((x-t)\cdot\mathbf{v})}}_{n;1}^{(n-1)}}_{\substack{p-m\\\text{columns}}} \;                \underbrace{{\widehat{((x-t)\cdot\mathbf{v})}}_{n;2}^{(n-1)}}_{\substack{m\\\text{columns}}}
			\big)
                \\
                &:=
            {((x-t)\cdot\mathbf{v})}_{n}^{(n-1)}P,
                \end{split}
		\end{align}
		such that
		\begin{align}
			\label{rank-equalities}
                \begin{split}
			\Rank 
			\begin{pmatrix}
				\mathcal{H}_{x-t}(n-1) & 
                    {\widehat{((x-t)\cdot\mathbf{v})}}_{n;1}^{(n-1)}
			\end{pmatrix} 
			&= 
			\Rank 
			\begin{pmatrix}
				\mathcal{H}_{x-t}(n-1) & {((x-t)\cdot\mathbf{v})}_{n}^{(n-1)}
			\end{pmatrix}\\
			&= 
			\Rank \mathcal{H}_{x-t}(n-1) + k = np.
                \end{split}
		\end{align}
		By \eqref{rank-equalities}, it     
		follows that 
		\begin{equation}
			\label{definition-hat-K-2}
			{\widehat{((x-t)\cdot\mathbf{v})}}_{n;2}^{(n-1)} = \begin{pmatrix}
				\mathcal{H}_{x-t}(n-1) & {\widehat{((x-t)\cdot\mathbf{v})}}_{n;1}^{(n-1)}
			\end{pmatrix} J
		\end{equation} 
		for some $J\in M_{(n+1)p -m,m}(\RR)$ 
		or equivalently
		\begin{align}
			\label{rank-equality-v2}
                \begin{split}
			\begin{pmatrix}
				\mathcal{H}_{x-t}(n-1) & 
				{\widehat{((x-t)\cdot\mathbf{v})}}_{n;1}^{(n-1)} & 
				{\widehat{((x-t)\cdot\mathbf{v})}}_{n;2}^{(n-1)}
			\end{pmatrix}  
			\begin{pmatrix}
				-J \\
				I_{m}
			\end{pmatrix}
			&= \mathbf{0}_{np,m}.
                \end{split}
		\end{align}
		We will now define $\widehat{Z} \in \Sym_p(\RR)$ such that 
		$$
		\begin{pmatrix}
			\mathcal{H}_{x-t}(n-1) & {\widehat{((x-t)\cdot\mathbf{v})}}_{n}^{(n-1)} \\
			\big({\widehat{((x-t)\cdot\mathbf{v})}}_{n}^{(n-1)}\big)^T & \widehat{Z}
		\end{pmatrix}				
		\begin{pmatrix}
			-J \\
			I_{m}
		\end{pmatrix}
		=
		\mathbf{0}_{(n+1)p,m}.
		$$ 
		By \eqref{rank-equality-v2},
		it suffices to establish the equality 
		\begin{equation}
			\label{eq:definition-of-widehatZ}
			\Big(
				\big({\widehat{((x-t)\cdot\mathbf{v})}}_{n}^{(n-1)}\big)^T \;\;\; \widehat{Z}
			\Big) 
			\begin{pmatrix}
				-J \\
				I_{m}
			\end{pmatrix}
			= \mathbf{0}_{p,m}.
		\end{equation}
		Let us decompose $\widehat{Z}$ as
		\begin{equation}
			\label{def:Z}
			\widehat{Z} 
			:= 
			\begin{pmatrix}
				\widehat{Z}_1 & \widehat{Z}_2 \\
				\widehat{Z}_2^T & \widehat{Z}_3
			\end{pmatrix},
		\end{equation}
		where 
		$\widehat{Z}_1$, 
		$\widehat{Z}_2$ and
		$\widehat{Z}_3$
		are of sizes 
		$(p-m)\times (p-m)$,
		$(p-m)\times m$
		and 
		$m \times m$,
		respectively.
		In this notation, \eqref{eq:definition-of-widehatZ} becomes
		\begin{equation}
			\label{eq:definition-of-widehatZ-v2}
			\begin{pmatrix}
				\big({\widehat{((x-t)\cdot\mathbf{v})}}_{n;1}^{(n-1)}\big)^T & \widehat{Z}_1 & \widehat{Z}_2\\
				\big({\widehat{((x-t)\cdot\mathbf{v})}}_{n;2}^{(n-1)}\big)^T & \widehat{Z}_2^T & \widehat{Z}_3
			\end{pmatrix} 
			\begin{pmatrix}
				-J\\
				I_{m}
			\end{pmatrix}
			= \mathbf{0}_{p,m}.
		\end{equation}
		We choose $\widehat{Z}_1 \in \Sym_{p-m}(\RR)$ so that
		\begin{equation}
			\label{choice-of-hat-Z1}
			\Rank
			\begin{pmatrix}
				\cH_{x-t}(n-1) & {\widehat{((x-t)\cdot\mathbf{v})}}_{n;1}^{(n-1)}\\
				\big({\widehat{((x-t)\cdot\mathbf{v})}}_{n;1}^{(n-1)}\big)^T & \widehat{Z}_1
			\end{pmatrix}
			=
			np+(p-m)
		\end{equation}
		and define 
		\begin{equation} 
			\label{equality-v3}
			\widehat{Z}_2 := \Big(
				\big({\widehat{((x-t)\cdot\mathbf{v})}}_{n;1}^{(n-1)}\big)^T \;\;\; \widehat{Z}_1
			\Big) J
			\quad\text{and}\quad 
			\widehat{Z}_3 := \Big(
				\big({\widehat{((x-t)\cdot\mathbf{v})}}_{n;2}^{(n-1)}\big)^T \;\;\; \widehat{Z}_2^T
			\Big) J.
		\end{equation}
		By \eqref{equality-v3},
		it is clear that $\widehat{Z}$ satisfies \eqref{eq:definition-of-widehatZ}. It remains to show that 
		$\widehat{Z}$ is symmetric. Since $\widehat{Z}_1\in \Sym_{p-m}(\RR)$, we only need to show that $\widehat{Z}_3\in \Sym_{m}(\RR)$. But this follows by the following computation:
		\begin{align*} 
			\widehat{Z}_3 
			&= \begin{pmatrix}
				\big({\widehat{((x-t)\cdot\mathbf{v})}}_{n;2}^{(n-1)}\big)^T & \widehat{Z}_2^T
			\end{pmatrix}  J 
			= \begin{pmatrix}
				{\widehat{((x-t)\cdot\mathbf{v})}}_{n;2}^{(n-1)} \\
				\widehat{Z}_2
			\end{pmatrix}^T  J\\ 
			&\underbrace{=}_{
				\substack{
					\eqref{definition-hat-K-2},\\
					\eqref{equality-v3}
				}
			} \left(\begin{pmatrix}
				\mathcal{H}_{x-t}(n-1) & {\widehat{((x-t)\cdot\mathbf{v})}}_{n;1}^{(n-1)}\\
				\big({\widehat{((x-t)\cdot\mathbf{v})}}_{n;1}^{(n-1)}\big)^T & \widehat{Z}_1
			\end{pmatrix}
			J \right)^T J\\ 
			&= J^T \begin{pmatrix}
				\mathcal{H}_{x-t}(n-1) & {\widehat{((x-t)\cdot\mathbf{v})}}_{n;1}^{(n-1)} \\[0.2em]
				\big({\widehat{((x-t)\cdot\mathbf{v})}}_{n;1}^{(n-1)}\big)^T & \widehat{Z}_1
			\end{pmatrix}^T J.
		\end{align*}
		Defining 
		the vectors $c_1, c_2, \ldots, c_{m}$
		by 
		$$C \equiv \begin{pmatrix}
			c_1 & c_2 & \cdots & c_{m}
		\end{pmatrix} := (I_{np}\oplus P)
		\begin{pmatrix}
			-J \\
			I_{m}
		\end{pmatrix}$$
		and the matrix $Z$ by 
		\begin{equation} 
			\label{def:Z-pd-case}
			Z:=P\widehat ZP^{T}\in \Sym_p(\RR),
		\end{equation}
		we have
		$$
		c_1, c_2, \ldots, c_{m}
		\in 
		\Ker
		\begin{pmatrix}
			\mathcal{H}_{x-t}(n-1) & {((x-t)\cdot\mathbf{v})}_{n}^{(n-1)} \\
			{\big({((x-t)\cdot\mathbf{v})}_{n}^{(n-1)}\big)}^T & Z
		\end{pmatrix}
		$$
		and $c_1,c_2,\ldots,c_m$ are linearly independent.
		Indeed,
		\begin{align*}
			&\begin{pmatrix}
			\mathcal{H}_{x-t}(n-1) & {((x-t)\cdot\mathbf{v})}_{n}^{(n-1)} \\
			{\big({((x-t)\cdot\mathbf{v})}_{n}^{(n-1)}\big)}^T & Z
		\end{pmatrix}
            C \\ 
            &=\begin{pmatrix}
				\mathcal{H}_{x-t}(n-1) & {\widehat{((x-t)\cdot\mathbf{v})}}_{n}^{(n-1)}P^T \\
				P\big({\widehat{((x-t)\cdot\mathbf{v})}}_{n}^{(n-1)}\big)^T & P\widehat{Z}P^T
			\end{pmatrix}(I_{np}\oplus P)
			\begin{pmatrix}
				-J \\
				I_{m}
			\end{pmatrix}
			\\ &= (I_{np}\oplus P)\begin{pmatrix}
				\mathcal{H}_{x-t}(n-1) & {\widehat{((x-t)\cdot\mathbf{v})}}_{n}^{(n-1)} \\
				\big({\widehat{((x-t)\cdot\mathbf{v})}}_{n}^{(n-1)}\big)^T & \widehat{Z}
			\end{pmatrix}(I_{np}\oplus P^T)(I_{np}\oplus P)
			\begin{pmatrix}
				-J \\
				I_{m}
			\end{pmatrix}
			\\ &= (I_{np}\oplus P)\begin{pmatrix}
				\mathcal{H}_{x-t}(n-1) & {\widehat{((x-t)\cdot\mathbf{v})}}_{n}^{(n-1)} \\
				\big({\widehat{((x-t)\cdot\mathbf{v})}}_{n}^{(n-1)}\big)^T & \widehat{Z}
			\end{pmatrix}
			\begin{pmatrix}
				-J \\
				I_{m}
			\end{pmatrix}
			\\ &\underbrace{=}_{
				\substack{
					\eqref{rank-equality-v2},\\
					\eqref{eq:definition-of-widehatZ}
				}
			} (I_{np}\oplus P)\mathbf{0}_{(n+1)p,m} = \mathbf{0}_{(n+1)p,m}.
		\end{align*}
		Defining 
		$$S_{2n+1} := Z + tS_{2n}\in \Sym_p(\RR),$$
		the equality \eqref{condition-to-prove}
		holds.
		Since $\cM$ from \eqref{def:matrix-loc-plus-1}
		is invertible,
		it follows that 
		\begin{align*}
			\widehat{\mathcal{M}} 
			&:= (I_{np}\oplus P^T)\mathcal{M} = \begin{pmatrix} 
				\widehat{\mathbf{v}}_{0}^{(n)} &
				\begin{array}{c}
					\mathcal{H}_{x-t}(n-1)\\
					\big({\widehat{((x-t)\cdot\mathbf{v})}}_{n}^{(n-1)}\big)^T
				\end{array}
			\end{pmatrix}
		\end{align*}
		is also invertible, where
		\begin{align}
			\label{permutation}
			\begin{split}
				\widehat{\mathbf{v}}_{0}^{(n)}
				&:=
				(I_{np}\oplus P^T)
				\mathbf{v}_{0}^{(n)}.
			\end{split}
		\end{align}
		Therefore
		$$
		\begin{pmatrix}
                {\widehat{((x-t)\cdot\mathbf{v})}}_{n;1}^{(n-1)}\\
			\widehat{Z}_1\\
			\widehat{Z}_2^T
		\end{pmatrix}
		=\widehat{\cM}
		\begin{pmatrix} 
			U_1 \\ U_2
		\end{pmatrix}
		$$
		for some real matrices $U_1\in M_{p,p-m}(\RR)$
		and $U_2\in M_{np,p-m}(\RR)$
		with 
		\begin{equation} 
			\label{rank-of-U1}
			\Rank U_1=p-m \quad (\text{see }\eqref{choice-of-hat-Z1}),
		\end{equation}
		or equivalently
		\begin{equation}
			\label{equality-v2}
			\begin{pmatrix}
				\widehat \cM &
				\begin{array}{c}
					{\widehat{((x-t)\cdot\mathbf{v})}}_{n;1}^{(n-1)}\\
					\widehat{Z}_1\\
					\widehat{Z}_2^T
				\end{array}
			\end{pmatrix}
			\begin{pmatrix}
				-U_1  \\
				-U_2   \\
				I_{p-m}
			\end{pmatrix}
			=\mathbf{0}_{(n+1)p, (p-m)}.
		\end{equation}
		By 
		\eqref{rank-equality-v2}, 
		\eqref{equality-v3}
		and 
		\eqref{equality-v2},
		we have that
		$$
		\begin{pmatrix} 
			\widehat{\mathbf{v}}_{0}^{(n)} &
			\begin{array}{cc}
				\mathcal{H}_{x-t}(n-1) & {\widehat{((x-t)\cdot\mathbf{v})}}_{n}^{(n-1)}\\
				\big({\widehat{((x-t)\cdot\mathbf{v})}}_{n}^{(n-1)}\big)^T & \widehat Z
			\end{array}
		\end{pmatrix} 
		\begin{pmatrix}
			-U_1 & \mathbf{0}_{p,m}\\
			-U_2 & -J_1 \\
			I_{p-m} & -J_2\\
			\mathbf{0}_{m,p-m} & I_m
		\end{pmatrix}
		=\mathbf{0}_{(n+1)p,p},
		$$
		where 
		$J=:
		\begin{pmatrix} J_1 \\ J_2 \end{pmatrix}
		\in 
		\begin{pmatrix}
			M_{np,m}(\RR)\\
			M_{p-m,m}(\RR)
		\end{pmatrix}.
		$
		Therefore
		\begin{align}
			\label{equality-v4}
                \begin{split}
			&\begin{pmatrix}
				{\widehat{((x-t)\cdot\mathbf{v})}}_{n}^{(n-1)} \\
				\widehat{Z}
			\end{pmatrix}
			\begin{pmatrix}
				I_{p-m} & -J_2\\
				\mathbf{0}_{m,p-m} & I_{m}
			\end{pmatrix}\\
			=& 
			\begin{pmatrix}
			    \mathcal{H}_{x-t}(n-1)\\
                    \big({\widehat{((x-t)\cdot\mathbf{v})}}_{n}^{(n-1)}\big)^T
			\end{pmatrix}
                \begin{pmatrix}
				U_2 & J_1
			\end{pmatrix}
                + 
			\widehat{\mathbf{v}}_{0}^{(n)}
			\begin{pmatrix}
				U_1 & \mathbf{0}_{p,m}
			\end{pmatrix}
                \end{split}
		\end{align}
		Using \eqref{permutation} and 
		$$
		\begin{pmatrix}
			{\widehat{((x-t)\cdot\mathbf{v})}}_{n}^{(n-1)} \\
			\widehat{Z}
		\end{pmatrix}  
		=\begin{pmatrix}
			{((x-t)\cdot\mathbf{v})}_{n}^{(n-1)}P \\
			P^TZP
		\end{pmatrix}  
		=
		(I_{np}\oplus P^T){((x-t)\cdot\mathbf{v})}_{n}^{(n)}P
		$$
		in \eqref{equality-v4},
		we get
		\begin{equation}
			\label{equality-v5}
			{((x-t)\cdot\mathbf{v})}_{n}^{(n)}
			G_{n}
			= 
			\sum_{i=0}^{n-1}
			{((x-t)\cdot\mathbf{v})}_{i}^{(n)}G_i + 
			{\mathbf{v}}_{0}^{(n)}
			\begin{pmatrix}
				U_1 & \mathbf{0}_{p,m}
			\end{pmatrix},
		\end{equation}
		where
		$G_{n}
		:=P
		\begin{pmatrix}
			I_{p-m} & -J_2\\
			\mathbf{0}_{m,p-m} & I_{m}
		\end{pmatrix}$
        and 
		$G_0, G_1,\ldots,{G}_{n-1}
		\in M_p(\RR).$
        Since $G_{n}$ is invertible, \eqref{equality-v5}
        is equivalent to 
		\begin{equation*}
			{((x-t)\cdot\mathbf{v})}_{n}^{(n)}
			= 
			\sum_{i=0}^{n-1}
			{((x-t)\cdot\mathbf{v})}_{i}^{(n)}G_iG_{n}^{-1} + 
			{\mathbf{v}}_{0}^{(n)}
			\begin{pmatrix}
				U_1 & \mathbf{0}_{p,m}
			\end{pmatrix}G_{n}^{-1}.
		\end{equation*}
		We now define the matrix polynomial
		\begin{align}
			\label{gen-poly}
			\begin{split}
				H(x) 
				&:= 
				(x^{n+1} - tx^n)I_p - \sum_{i=0}^{n-1}(x^{i+1} - tx^i)G_iG_n^{-1} - 
				\begin{pmatrix}
					U_1 & \mathbf{0}_{p,m}
				\end{pmatrix}G_n^{-1}
				\\
				&= (x - t)\left (x^nI_p - \sum_{i=0}^{n-1}x^iG_iG_n^{-1}\right ) -
				\begin{pmatrix}
					U_1 & \mathbf{0}_{p,m}
				\end{pmatrix}G_n^{-1}.
			\end{split}
		\end{align} 
		Observe that $H(x)$ is monic 
		of degree $n + 1$ and represents the block column relation  $H(X) = \mathbf{0}_{(n+1)p,p}$ in the matrix  
        \begin{equation}
        \label{def:extended-matrix}
		M(n+1)=
        \begin{pmatrix} 
			M(n) & \mathbf{v}_{n+1}^{(n)}\\
            \big(\mathbf{v}_{n+1}^{(n)}\big)^T & S_{2n+2}
		\end{pmatrix},
		\end{equation}
        where $S_{2n+2}$ is uniquely determined by $\Rank M(n)=\Rank M(n+1)$. Note that 
		$$\Rank \begin{pmatrix}
			U_1 & \mathbf{0}_{p,m}
		\end{pmatrix}G_n^{-1} = \Rank \begin{pmatrix}
			U_1 & \mathbf{0}_{p,m}
		\end{pmatrix} \underbrace{=}_{\eqref{rank-of-U1}} p-m,$$ whence 
		$$\dim \big(\Ker \begin{pmatrix}
			U_1 & \mathbf{0}_{p,m}
		\end{pmatrix}G_n^{-1}\big) =m.$$
		{By Lemma} \ref{le:determinant-of-matrix-valued-polynomial-v1} used for $H(x)$ from \eqref{gen-poly}, we get that $$\det H(x) = (x - t)^{m}g(x),$$ for some polynomial $g(x) \in \RR[x]$ of degree $(n+1)p-m$. By Theorem \ref{th:Hamburger-matricial}, there exists a
		representing measure for $L$ of the form 
		$\mu=\sum_{j=1}^\ell \delta_{x_j}A_j$, 
		where $\ell\in \NN$, $x_j\in \RR$ are pairwise distinct, $A_j\in \Sym_p^{\succeq 0}(\RR)$ and $(n+1)p=\Rank M(n)=\sum_{j=1}^\ell \Rank A_j$.
		By Lemma \ref{lemma:atoms}, 
		the atoms $x_1, x_2, \ldots, x_\ell$ are exactly pairwise distinct zeros of $\det H(x)$. Hence, $t = x_{j'}$ for some $j' \in \{1, 2, \ldots, \ell\}$, and $\Rank A_{j'} \geq m$. We now need to show that $\Rank A_{j'} = m$. Suppose on the contrary that $\Rank A_{j'} = m'$ for some 
		\begin{equation} 
			\label{contradiction-argument}
			m' > m. 
		\end{equation}
		Let us define the measure 
		$\widetilde{\mu} := \mu - \delta_tA_{j'}.$
		By analogous reasoning as for $\mu$
		in the proof of implication $\eqref{th:mainTheorem-pt1} \Rightarrow \eqref{th:mainTheorem-pt2}$,
		we obtain an equality of type \eqref{rank-equality-implication-1-to-2-v2}, where $m$ is replaced by $m'$, and $\widetilde a_1, \widetilde a_2, \ldots, \widetilde a_{p-m'}$ are $p-m'$ columns of the block $\widetilde{\mathbf{v}}_{0}^{(n)} := \begin{pmatrix}
			\widetilde{S}_0 & \widetilde{S}_1 & \cdots & \widetilde{S}_n
		\end{pmatrix}$, where $\widetilde{S}_i := \int_{\RR} x^i\; d\widetilde{\mu}$. The equality is equivalent to
		\begin{align*}
            \mathcal{H}_{x-t}(n) 
            = \begin{pmatrix} \widetilde a_1 & \widetilde a_2 & \cdots & \widetilde a_{p-m'} & 	\begin{array}{c}
					\mathcal{H}_{x-t}(n-1) \\
					{\big({((x-t)\cdot\mathbf{v})}_{n}^{(n-1)}\big)}^T
				\end{array}
			\end{pmatrix} \begin{pmatrix}
				\mathbf{0}_{p-m', np} & \widetilde W \\
				I_{np} & \widetilde V
			\end{pmatrix},
		\end{align*}
		for some matrices $\widetilde W \in M_{p-m',p}(\RR)$ and $\widetilde V \in M_{np,p}(\RR)$.
		Hence, we have
		\begin{align}
			\label{rank-inequality-computation-end-of-proof-2to1}
			\begin{split}
				&\Rank \mathcal{H}_{x-t}(n)\\
				=& \Rank \begin{pmatrix}
					\begin{array}{c}
						\mathcal{H}_{x-t}(n-1)  \\
						{\big({((x-t)\cdot\mathbf{v})}_{n}^{(n-1)}\big)}^T
					\end{array} & \begin{pmatrix}
						\widetilde a_1 & \widetilde a_2 & \cdots & \widetilde a_{p-m'}
					\end{pmatrix} \widetilde W + \begin{pmatrix}
						\mathcal{H}_{x-t}(n-1)  \\
						{\big({((x-t)\cdot\mathbf{v})}_{n}^{(n-1)}\big)}^T
					\end{pmatrix}\widetilde V
				\end{pmatrix} \\
				=& \Rank \begin{pmatrix}
					\begin{array}{c}
						\mathcal{H}_{x-t}(n-1)  \\
						{\big({((x-t)\cdot\mathbf{v})}_{n}^{(n-1)}\big)}^T
					\end{array} & \begin{pmatrix}
						\widetilde a_1 & \widetilde a_2 & \cdots & \widetilde a_{p-m'}
					\end{pmatrix}\widetilde W
				\end{pmatrix} \\
				\leq& \Rank \begin{pmatrix}
					\mathcal{H}_{x-t}(n-1)  \\
					{\big({((x-t)\cdot\mathbf{v})}_{n}^{(n-1)}\big)}^T
				\end{pmatrix} + \Rank \begin{pmatrix}
					\widetilde a_1 & \widetilde a_2 & \cdots & \widetilde a_{p-m'}
				\end{pmatrix}\widetilde W \\
				\leq& \Rank \begin{pmatrix}
					\mathcal{H}_{x-t}(n-1)  \\
					{\big({((x-t)\cdot\mathbf{v})}_{n}^{(n-1)}\big)}^T
				\end{pmatrix} + p-m' \\
				=& np + p-m'.
			\end{split}
		\end{align}
		On the other hand, we have 
		\begin{align}
			\label{rank-equality-computation-end-of-proof-2to1}
			\begin{split}
				\Rank \mathcal{H}_{x-t}(n)
                    &= \Rank \begin{pmatrix}
					\mathcal{H}_{x-t}(n-1)  & 
                        {((x-t)\cdot\mathbf{v})}_{n;1}^{(n-1)} & {((x-t)\cdot\mathbf{v})}_{n;2}^{(n-1)}\\
					\big({((x-t)\cdot\mathbf{v})}_{n;1}^{(n-1)}\big)^T & \widehat{Z}_1 & \widehat{Z}_2 \\
					\big({((x-t)\cdot\mathbf{v})}_{n;2}^{(n-1)}\big)^T & \widehat{Z}_2^T & \widehat{Z}_3
				\end{pmatrix} \\
				&\underbrace{=}_{
					\substack{\eqref{definition-hat-K-2},\\\eqref{equality-v3}}}
				\Rank \begin{pmatrix}
					\mathcal{H}_{x-t}(n-1) & {((x-t)\cdot\mathbf{v})}_{n;1}^{(n-1)}\\
					\big({((x-t)\cdot\mathbf{v})}_{n;1}^{(n-1)}\big)^T & \widehat{Z}_1\\
					\big({((x-t)\cdot\mathbf{v})}_{n;2}^{(n-1)}\big)^T & \widehat{Z}_2^T
				\end{pmatrix} \\
				&\underbrace{=}_{
					\substack{\eqref{definition-hat-K-2},\\\eqref{equality-v3}}}
				\Rank \begin{pmatrix}
					\mathcal{H}_{x-t}(n-1) & {((x-t)\cdot\mathbf{v})}_{n;1}^{(n-1)}\\
					\big({((x-t)\cdot\mathbf{v})}_{n;1}^{(n-1)}\big)^T & \widehat{Z}_1
				\end{pmatrix} \\
				&\underbrace{=}_{\eqref{choice-of-hat-Z1}}
				np + p - m
			\end{split}
		\end{align}
		Combining \eqref{rank-inequality-computation-end-of-proof-2to1} and \eqref{rank-equality-computation-end-of-proof-2to1}, we get $m \geq m'$, which is a contradiction with \eqref{contradiction-argument}. Therefore
        $\mult_\mu t=m$ and $g(t) \neq 0$. This completes the proof.
	\end{proof}

  \begin{proof}[Proof of Corollary \ref{co:corollary-atom-avoidance}]
    Let 
        $\mathbf{v}_i^{(j)}$ be as in 
            \eqref{def:columns-mm} 
        and 
        ${((x-t)\cdot\mathbf{v})}_{i}^{(j)}$
        as in 
            \eqref{def:columns-loc-mm}.
		Since
		$
		M(n)
            $
		is invertible,
		it follows that the matrix
		\begin{equation*}
			\cM:=
                    \begin{pmatrix}
				\mathbf{v}_{0}^{(n)} &
				\begin{array}{c}
					\mathcal{H}_{x-t}(n-1) \\ 
                        {\big({((x-t)\cdot\mathbf{v})}_{n}^{(n-1)}\big)}^T
				\end{array}
			\end{pmatrix}
		\end{equation*}
		is also invertible.
            Therefore
				$$\Rank\begin{pmatrix}
					\mathcal{H}_{x-t}(n-1) \\ 
                        {\big({((x-t)\cdot\mathbf{v})}_{n}^{(n-1)}\big)}^T
				\end{pmatrix}
                =np,
                $$
            whence $\Rank\mathcal{H}_{x-t}(n-1) \geq (n-1)p.$
            By Theorem \ref{th:mainTheorem}, 
            the corollary follows.
    \end{proof}

  \begin{proof}[Proof of Corollary \ref{co:Simonov-pd}]
    Define a sequence 
        $\widetilde{\cS}=(\widetilde{S}_0,\widetilde{S}_1,\ldots,\widetilde{S}_{2n_1+2n_2})$,
    where
    $\widetilde S_i:=S_{i-2n_1}$. By assumption \eqref{strong-moment-matrix},
    $M_{\widetilde{S}}(n_1+n_2):=(\widetilde{S}_{i+j-2})_{i,j=1}^{n_1+n_2+1}$
    is positive definite.
    By Corollary \ref{co:corollary-atom-avoidance},
    $\widetilde \cS$ has a minimal representing measure $\widetilde \mu=\sum_{j=1}^{\ell}A_j\delta_{x_j}$
    for some $x_j\in \RR\setminus \{0\}$ and $A_j\in S_{p}^{\succeq 0}(\RR)$.
    Namely,
    $\widetilde S_i=\sum_{j=1}^{\ell} A_j x_j^{i}$
    for each $i=0,1,\ldots,2n_1+2n_2$.
    But then 
    $$
    S_i=\widetilde S_{i+2n_1}
        =\sum_{j=1}^{\ell} A_j x_j^{i+2n_1}
        =\sum_{j=1}^{\ell} (A_jx_j^{2n_1}) x_j^{i},
    $$
    whence $\mu:=\sum_{j=1}^{\ell}(A_jx_j^{2n_1})\delta_{x_j}$
    is a representing measure in Corollary \ref{co:Simonov-pd}. 
    \end{proof}

	\begin{remark}
		\label{re:remarkForTheoremTwoToOne}
		\begin{enumerate}[leftmargin=*]
			\item
			\label{re:remarkForTheoremTwoToOne-pt1}
			The polynomial $H(x)$ (see \eqref{gen-poly}), which is a block column relation of the matrix in \eqref{def:extended-matrix},
			can also be obtained by computing $$\begin{pmatrix}
				H_0^T & H_1^T & \cdots & H_n^T
			\end{pmatrix}^T := M(n)^{-1} \mathbf{v}_{n+1}^{(n)},$$ 
			to obtain $H(x) = x^{n+1}I_p - \sum_{i=0}^{n}x^iH_i.$
			\item
			The zeroes of the polynomial $g(x)$ from \eqref{eq:determinant}
			correspond to the other atoms in the representing measure, while the multiplicity of the atom as the zero of $g(x)$ coincides with the multiplicity of the atom.
			\item  
			\label{re:remarkForTheoremTwoToOne-pt2}
			Assume a linear operator $L:\RR[x]_{\leq 2n}\to \Sym_p(\RR)$
			has a representing measure
			$\mu=\sum_{j=1}^\ell \delta_{x_j}A_j$,
			where the atoms $x_j\in \RR$ are pairwise distinct, $A_j\in \Sym_p^{\succeq 0}(\RR)$
			and
			$\sum_{j=1}^\ell \Rank A_j = (n+1)p$.
			Assume that we know the atoms $x_1,x_2,\ldots,x_\ell$.
			It remains to compute the masses $A_j$.
			We denote by $V \equiv V_{(x_1,x_2,\ldots,x_\ell)} := \left (x_j^{i-1}\right )_{i,j = 1}^\ell$ the Vandermonde matrix. Since $x_1, x_2, \ldots, x_\ell$ are pairwise distinct, it follows that $V$ is invertible.
			The masses $A_j$ are obtained via 
			$$\begin{pmatrix}
				A_1 & A_2 & \cdots & A_\ell
			\end{pmatrix}^T = \left (V^{-1} \otimes I_p\right )
			\mathbf{v}_{0}^{(\ell-1)},$$ 
			where $\otimes$ denotes the Kronecker product of two matrices,
			i.e., $V^{-1}\otimes I_p=(V\otimes I_p)^{-1}=\big((x_{j}^{i-1}I_p)_{i,j=1}^{\ell}\big)^{-1}$
			. Note that if $\ell > 2n+2$, then not all $S_j$ are given. In particular, $S_{2n+2}, S_{2n+3}, \ldots, S_{\ell - 1}$ need to be computed recursively by 
			$$S_j = \begin{pmatrix}
				S_{j-n-1} & S_{j-n} & \cdots & S_{j-1}
			\end{pmatrix}  \begin{pmatrix}
				H_0^T & H_1^T & \cdots & H_n^T
			\end{pmatrix}^T,$$ 
			for $j = 2n+2, 2n+3, \ldots, \ell - 1$,
			where $H_i$ are as in \eqref{re:remarkForTheoremTwoToOne-pt1} above.
			\smallskip
			
			\item
			\label{re:remarkForTheoremTwoToOne-pt3}
			If $m=p$ in Theorem \ref{th:mainTheorem}, then $k$ must be 0 in \eqref{rank-increasement} and there are no blocks ${\widehat{((x-t)\cdot\mathbf{v})}}_{n;1}^{(n-1)}$ (see \eqref{def:widehatK}) and $\widehat{Z}_1$, $\widehat{Z}_2$ (see \eqref{def:Z}). Moreover, $k=0$ implies that
			$$\Rank \begin{pmatrix}
				\mathcal{H}_{x-t}(n-1) & {((x-t)\cdot\mathbf{v})}_{n}^{(n-1)}
			\end{pmatrix} = 
			\Rank \mathcal{H}_{x-t}(n-1)
			=np,$$
			whence $\mathcal{H}_{x-t}(n-1)$ is invertible.
			Further, $J$ in \eqref{definition-hat-K-2}
			is equal to 
                $$J = \mathcal{H}_{x-t}(n-1)^{-1}{((x-t)\cdot\mathbf{v})}_{n}^{(n-1)},$$ 
                while
			$Z$ in \eqref{definition-hat-K-2} is equal to 
                $$
                Z = {\big({((x-t)\cdot\mathbf{v})}_{n}^{(n-1)}\big)}^T(\mathcal{H}_{x-t}(n-1))^{-1}{((x-t)\cdot\mathbf{v})}_{n}^{(n-1)}.
                $$ 
                Therefore, the measure $\mu$ for $L$, with 
                $\mult_\mu t=m$, is unique. 
			\smallskip
			
			\item 
			If $m<p$ in Theorem \ref{th:mainTheorem},
			then $k>0$ in \eqref{rank-increasement} and we have a free choice of selecting $\widehat{Z}_1 \in \Sym_{p-m}(\RR)$ and different possibilities for $J$ in \eqref{definition-hat-K-2}. To be precise, $J$ can be chosen arbitrarily from the set 
			\begin{align*} 
                &\Big\{\begin{pmatrix}
				\mathcal{H}_{x-t}(n-1) & {\widehat{((x-t)\cdot\mathbf{v})}}_{n;1}^{(n-1)}
			\end{pmatrix}^\dag{\widehat{((x-t)\cdot\mathbf{v})}}_{n;2}^{(n-1)} + U \colon U
			\in M_{(n+1)p-m, m}(\RR)\\
            &\hspace{3cm}
			\text{ such that }
			\begin{pmatrix}
				\mathcal{H}_{x-t}(n-1) & {\widehat{((x-t)\cdot\mathbf{v})}}_{n;1}^{(n-1)}
			\end{pmatrix}U=\mathbf{0}_{np,m}\Big\},
                \end{align*}
			where $(\ast)^\dag$ denotes the Moore-Penrose pseudoinverse of the matrix $(\ast)$. Therefore, in this case, a measure  $\mu$ for $L$ such that
            $\mult_\mu t=m$ is not unique, as can be seen in 
			Example \ref{ex:non-uniqueness} below.
		\end{enumerate}
	\end{remark}

    \section{Examples}
    \label{sec:examples}

    In this section we demonstrate the application of Theorem \ref{th:mainTheorem} on numerical examples.\\

    The following example considers a moment sequence 
    $\mathcal{S}$ with $k > 0$ as defined in \eqref{rank-increasement}. 
    We construct two distinct $(n+1)p$--atomic representing measures for $\mathcal{S}$. In both cases, the measures include $0$ in the support with largest multiplicity allowed by Theorem \ref{th:mainTheorem}, namely $m = p - k$, demonstrating that a representing measure for $\mathcal{S}$ containing an atom $t$ with $\mult_\mu t=m$ is not unique whenever $m < p$.
    
	\begin{example}{\footnote{The \textit{Mathematica} file with numerical computations can be found on the link \url{https://github.com/ZobovicIgor/Matricial-Gaussian-Quadrature-Rules/tree/main}.}}
		\label{ex:non-uniqueness}
		Let $p = 2$, $n = 1$ 
		and 
		$$S_0 = \begin{pmatrix}
			18 & 10 \\
			10 & 7
		\end{pmatrix}, 
		\quad
		S_1 = \begin{pmatrix}
			2 & 2 \\
			2 & 2
		\end{pmatrix}, 
		\quad 
		S_2 = \begin{pmatrix}
			50 & 26 \\
			26 & 14
		\end{pmatrix}.$$ 
		We can easily check that $M(1) \succ 0$. Let $t = 0$. We have that 
        $$\mathcal{H}_x(0) = \begin{pmatrix}
			S_1
		\end{pmatrix} = \begin{pmatrix}
			2 & 2 \\
			2 & 2
		\end{pmatrix} \quad \mathrm{and} \quad 
            {(x\cdot\mathbf{v})}_1^{(0)} = \begin{pmatrix}
			S_2
		\end{pmatrix} = \begin{pmatrix}
			50 & 26 \\
			26 & 14
		\end{pmatrix}.$$
		We observe that $\Rank  \mathcal{H}_x(0) = 1$ and $\Rank \begin{pmatrix}
			\mathcal{H}_x(0) & {(x\cdot\mathbf{v})}_1^{(0)}
		\end{pmatrix} = 2$, therefore $k = 1$ in \eqref{rank-increasement}. In this case, we can take a trivial  permutation $P=I_2$ in \eqref{def:widehatK} since 
            ${(x\cdot\mathbf{v})}_1^{(0)} = 
            \begin{pmatrix}
			  {(x\cdot\mathbf{v})}_{1;1}^{(0)} & 
                {(x\cdot\mathbf{v})}_{1;2}^{(0)}
		\end{pmatrix}$, where 
            ${(x\cdot\mathbf{v})}_{1;1}^{(0)} = \begin{pmatrix}
			50 \\
			26
		\end{pmatrix}$, satisfies
		$$
		\Rank \begin{pmatrix}
			\mathcal{H}_{x}(0) &  {(x\cdot\mathbf{v})}_1^{(0)}
		\end{pmatrix}=
		\Rank \begin{pmatrix}
			\mathcal{H}_x(0) &  {(x\cdot\mathbf{v})}_{1;1}^{(0)}
		\end{pmatrix}
		.$$ 
		Let $J := \begin{pmatrix}
			-\frac{1}{2} &
			1 &
			\frac{1}{2}
		\end{pmatrix}^T$. We check that 
        $$ {(x\cdot\mathbf{v})}_{1;2}^{(0)} = 
            \begin{pmatrix}
			26 \\
			14
		\end{pmatrix} = 
            \begin{pmatrix}
			\mathcal{H}_x(0) &  {(x\cdot\mathbf{v})}_{1;1}^{(0)}
		\end{pmatrix}J.
        $$
		We will now construct the matrix $Z = \widehat{Z}$
        (see \eqref{def:Z}), which is used in the proof of Theorem \ref{th:mainTheorem} to obtain a {polynomial $H(x)$ 
        (see \eqref{gen-poly}),  being a block column relation of 
			$\begin{pmatrix}
				M(1) & \mathbf{v}_{2}^{(1)}
			\end{pmatrix}
			$ and such that $\cZ(\det H(x))$ is precisely the set of atoms in some minimal representing measure for $\cS:=(S_0,S_1,S_2)$.} Note that since $t = 0$, we have $S_3 = Z$. For every $Z_1 \in \RR$, the matrix $$S_3 = Z = \begin{pmatrix}
			Z_1 & Z_2 \\
			Z_2^T & Z_3
		\end{pmatrix},$$ where $Z_2 = \begin{pmatrix}
			\big({(x\cdot\mathbf{v})}_{1;1}^{(0)}\big)^T & Z_1
		\end{pmatrix}J$ and $Z_3 = \begin{pmatrix}
			\big({(x\cdot\mathbf{v})}_{1;2}^{(0)}\big)^T & Z_2^T
		\end{pmatrix}J$,
		is symmetric and satisfies \eqref{eq:definition-of-widehatZ}.
		
		Let $Z_1^{(1)} := 2$ and $Z_1^{(2)} := 98$. Computing 
		$$
		Z_2^{(i)} = \begin{pmatrix}
			\big({(x\cdot\mathbf{v})}_{1;1}^{(0)}\big)^T & Z_1^{(i)}
		\end{pmatrix}J
		\quad \text{and} \quad
		Z_3^{(i)} = \begin{pmatrix}
			\big({(x\cdot\mathbf{v})}_{1;2}^{(0)}\big)^T & (Z_2^{(i)})^T
		\end{pmatrix}J
		$$ 
		for $i = 1,2$, we get
		$S_3^{(1)} = Z^{(1)} = \begin{pmatrix}
			2 & 2 \\
			2 & 2
		\end{pmatrix}$ and $S_3^{(2)} = Z^{(2)} = \begin{pmatrix}
			98 & 50 \\
			50 & 26
		\end{pmatrix}$. We can obtain the coefficients $H_0^{(i)}, H_1^{(i)}$ of the corresponding matrix polynomials 
		$$H^{(i)}(x) = x^2I_2 - xH_1^{(i)}-H_0^{(i)}$$ 
		by computing (see Remark \ref{re:remarkForTheoremTwoToOne}.\eqref{re:remarkForTheoremTwoToOne-pt1})
		$$\begin{pmatrix}
			H_0^{(i)} \\
			H_1^{(i)}
		\end{pmatrix} = M(1)^{-1} \cdot \begin{pmatrix}
			S_2 \\
			S_3^{(i)}
		\end{pmatrix}
		$$ 
		for $i = 1,2$. The polynomials are the following: 
		\begin{align*} 
			H^{(1)}(x) 
			&= x^2I_2 - x\begin{pmatrix}
				2 & \frac{1}{2} \\
				-4 & -1
			\end{pmatrix}
			-
			\begin{pmatrix}
				3 & \frac{3}{2} \\
				0 & 0
			\end{pmatrix} 
			, \\ 
			H^{(2)}(x) 
			&= x^2I_2 - x\begin{pmatrix}
				6 & \frac{5}{2} \\
				-8 & -3
			\end{pmatrix}-
			\begin{pmatrix}
				3 & \frac{3}{2} \\
				0 & 0
			\end{pmatrix},
		\end{align*}
		with the determinants
		\begin{align*}
			\det H^{(1)}(x) 
			&= x(x - 1)(x - \sqrt{3})(x + \sqrt{3}),\\
			\det H^{(2)}(x) 
			&= x(x - 3)(x - 1)(x + 1).
		\end{align*}
		Therefore the sets $\{0, 1, \sqrt{3}, -\sqrt{3}\}$ and $\{0, 3, 1, -1\}$ represent the atoms of two distinct matrix measures for $(S_0, S_1, S_2)$. Note that both determinants only have zeroes of multiplicity $1$, therefore the multiplicities of all the atoms from both sets are $1$. We confirm this by computing the corresponding masses for both sets of atoms. It turns out (using Remark
		\ref{re:remarkForTheoremTwoToOne}.\eqref{re:remarkForTheoremTwoToOne-pt2}) that the masses for the atoms $0, 1, \sqrt{3}, -\sqrt{3}$ in the first measure are 
		$\begin{pmatrix}
			0 & 0 \\
			0 & 1
		\end{pmatrix}$, 
		$\begin{pmatrix}
			2 & 2 \\
			2 & 2
		\end{pmatrix},$ 
		$\begin{pmatrix}
			8 & 4 \\
			4 & 2
		\end{pmatrix},$ 
		$\begin{pmatrix}
			8 & 4 \\
			4 & 2
		\end{pmatrix}$, respectively, and the masses for the atoms $0, 3, 1, -1$ in the second measure are $\begin{pmatrix}
			0 & 0 \\
			0 & 1
		\end{pmatrix},$ 
		$\begin{pmatrix}
			4 & 2 \\
			2 & 1
		\end{pmatrix},$ 
		$\begin{pmatrix}
			2 & 2 \\
			2 & 2
		\end{pmatrix},$ 
		$\begin{pmatrix}
			12 & 6 \\
			6 & 3
		\end{pmatrix}$, respectively.
	\end{example}
\bigskip

The next example illustrates that the inequality in Theorem \ref{th:mainTheorem}.\eqref{th:mainTheorem-pt2} can be strict. Namely, starting from a measure whose atom $0$ has multiplicity strictly smaller than $\Rank \mathcal{H}_x(n-1) - (n-1)p$, we build a new representing measure in which the multiplicity of the atom $0$ is the highest possible, 
i.e., equal to $\Rank \mathcal{H}_x(n-1) - (n-1)p$. 

\begin{example}{\footnote{The \textit{Mathematica} file with numerical computations can be found on the link \url{https://github.com/ZobovicIgor/Matricial-Gaussian-Quadrature-Rules/tree/main}.}}
\label{ex:2}
	Let $\mu=\sum_{j=1}^4 \delta_{x_j}A_j$ be a finitely atomic matrix measure with
	$(x_1, x_2, x_3, x_4) = (0, 1, -1, -2)$ and $(A_1, A_2, A_3, A_4) = \left (\begin{pmatrix}
		2 & 2 \\
		2 & 2
	\end{pmatrix}, \begin{pmatrix}
		1 & 1 \\
		1 & 1
	\end{pmatrix}, \begin{pmatrix}
		0 & 0 \\
		0 & 1
	\end{pmatrix}, \begin{pmatrix}
		1 & 0 \\
		0 & 0
	\end{pmatrix}\right )$, and let $L$ be a linear operator, defined by $L(p)=\int_{\RR} p\; d\mu$ for every $p\in \RR[x]_{\leq 2}$. We define 
	$$
	S_0 := 
	L(1)=
	\begin{pmatrix}
		4 & 3 \\
		3 & 4
	\end{pmatrix},\quad
	S_1 := 
	L(x)=
	\begin{pmatrix}
		-1 & 1 \\
		1 & 0
	\end{pmatrix}\quad\text{and}\quad
	S_2:=
	L(x^2)=
	\begin{pmatrix}
		5 & 1 \\
		1 & 2
	\end{pmatrix}.
	$$
	The measure $\mu$ contains the atom $0$ 
    with $\mult_\mu 0=\Rank A_1 = 1$.
	However, the localizing matrix 
    $\cH_x(0)= \begin{pmatrix}
		S_1
	\end{pmatrix}$ is invertible, therefore $$\Rank \begin{pmatrix}
		\mathcal{H}_x(0) & S_2
	\end{pmatrix} = \Rank \mathcal{H}_x(0) = 2 < \Rank \mathcal{H}_x(0) + 2 - \mult_\mu 0 = 3.$$ Since $\Rank \begin{pmatrix}
		\mathcal{H}_x(0) & S_2
	\end{pmatrix} = \Rank \mathcal{H}_x(0)$, it follows from Theorem \ref{th:mainTheorem} that there exists a $4$--atomic representing matrix measure $\widetilde{\mu}$ for $L$ which contains the atom $0$ with $\mult_{\widetilde \mu}0=2$. Such measure $\widetilde{\mu}$ is unique (see Remark \ref{re:remarkForTheoremTwoToOne}.\eqref{re:remarkForTheoremTwoToOne-pt3}) and we will now find its atoms. We first compute
	$$S_3^{(\widetilde{\mu})} := S_2^TS_1^{-1}S_2 = \begin{pmatrix}
		5 & 1 \\
		1 & 2
	\end{pmatrix}^T \cdot \begin{pmatrix}
		-1 & 1 \\
		1 & 0
	\end{pmatrix}^{-1} \cdot \begin{pmatrix}
		5 & 1 \\
		1 & 2
	\end{pmatrix} = \begin{pmatrix}
		11 & 13 \\
		13 & 8
	\end{pmatrix}.$$
	Then we obtain the {polynomial $H(x)$,  which is a block column relation of 
		$\begin{pmatrix}
			M(1) & \mathbf{v}_2^{(1)}
		\end{pmatrix}
		$ and such that $\cZ(\det H(x))$ is precisely the set of atoms in some minimal represenitng measure for $L$.} Namely, $H(x) = x^2I_2 - xH_1-H_0$
	where
	$$\begin{pmatrix}
		H_0 \\
		H_1
	\end{pmatrix} = M(1)^{-1} \begin{pmatrix}
		S_2 \\
		S_3^{(\widetilde{\mu})}
	\end{pmatrix} = \begin{pmatrix}
		\begin{pmatrix}
			0 & 0 \\
			0 & 0
		\end{pmatrix} \\
		\begin{pmatrix}
			1 & 2 \\
			6 & 3
		\end{pmatrix}
	\end{pmatrix}.$$ Thus, it follows that $$H(x) = x^2I_2 - x\begin{pmatrix}
		1 & 2 \\
		6 & 3
	\end{pmatrix}.$$ The atoms of the measure $\widetilde{\mu}$ are the zeroes of
	$$\det H(x) = x^2(x^2 - 4x - 9) = x^2(x - 2 + \sqrt{13})(x - 2 - \sqrt{13}),$$ therefore $$\widetilde{\mu} = \delta_{0}B_1 + \delta_{2 - \sqrt{13}}B_2 + \delta_{2 + \sqrt{13}}B_3,$$ where $\Rank B_1 = 2$ and $\Rank B_2 = \Rank B_3 = 1$. By Remark \ref{re:remarkForTheoremTwoToOne}.\eqref{re:remarkForTheoremTwoToOne-pt2}, the masses of the atoms are 
	$$B_1 = \begin{pmatrix}
		3 & \frac{10}{3} \\[0.3em]
		\frac{10}{3} & \frac{34}{9}
	\end{pmatrix},\quad B_2 = \begin{pmatrix}
		\frac{13+3\sqrt{13}}{26} & \frac{-13-5\sqrt{13}}{78} \\[0.3em]
		\frac{-13-5\sqrt{13}}{78} & \frac{13+2\sqrt{13}}{117}
	\end{pmatrix},\quad B_3 = \begin{pmatrix}
		\frac{13-3\sqrt{13}}{26} & \frac{5\sqrt{13}-13}{78} \\[0.3em]
		\frac{5\sqrt{13}-13}{78} & \frac{13-2\sqrt{13}}{117}
	\end{pmatrix}.$$
\end{example}

\section{Generalized matricial Gaussian quadrature rules with prescribed atom}
\label{sec:posDef-generalized}

In this section 
 we allow the evaluation at $\infty$ (see \eqref{subsec:evaluation-at-infty}) as a measure and prove a sufficient condition for the existence of a generalized matricial Gaussian quadrature rule for a linear operator $L:\RR[x]_{\leq 2n}\to \Sym_p(\RR)$, containing $\Rank M(n-1)$ real atoms, among which a prescribed atom has a prescribed multiplicity (see Theorem \ref{th:WithAtomInfinity}).\\

Let $m,n\in \NN$ and 
	\begin{equation*}
		\cM=\left( \begin{array}{cc} A & B \\ C & D \end{array}\right)\in M_{n+m}(\RR),
	\end{equation*}
where $A\in \RR^{n\times n}$, $B\in \RR^{n\times m}$, $C\in \RR^{m\times n}$  and $D\in \RR^{m\times m}$.
The \textbf{Schur complement} \cite{Zha05} of $D$ in $\cM$ is defined by 
    $\cM\big/D=A-BD^{-1} C$. 

\begin{theorem}
\label{th:WithAtomInfinity}
  Let $n, p \in \NN$ and $L:\RR[x]_{\leq 2n}\to \SSS_p(\RR)$
  be a linear operator such that $M(n-1)$ is positive definite.
        Fix $t\in \RR$ and $m\in \NN\cup\{0\}$.
        Assume the notation from \S\ref{sec:prel}.
        If 
        \begin{equation} 
            \label{cond:generalized-cond-1}
            m= \Rank 
                \begin{pmatrix}
                    \cH_{x-t}(n-2) & {(\mathbf{(x-t)\cdot v})}_{n-1}^{(n-2)}
                \end{pmatrix}
                -\Rank \cH_{x-t}(n-2)
        \end{equation}
        and
        \begin{equation} 
            \label{cond:generalized-cond-2}
            M(n)\big/ M(n-1)\succeq 0,
        \end{equation}
        then there exists a $(\Rank M(n))$--atomic 
            $(\RR\cup \{\infty\})$--representing measure $\mu$
            for $L$ such that $\mult_\mu t=m$
            and $\mult_{\mu}\infty=\Rank M(n)-np$.
\end{theorem}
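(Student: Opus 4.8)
The plan is to peel off the atom at infinity through the Schur complement, reducing the statement to a finite, \emph{flat} truncated moment problem that the earlier machinery already handles. Concretely, I would set $A := M(n)\big/M(n-1)$; by hypothesis \eqref{cond:generalized-cond-2} we have $A\succeq 0$, and since $M(n-1)\succ 0$ the Schur complement formula gives $\Rank M(n) = np + \Rank A$, so that $\Rank A = \Rank M(n)-np$ is exactly the multiplicity to be placed at $\infty$. I would then modify only the top moment, replacing $S_{2n}$ by $\widetilde S_{2n} := S_{2n}-A$ and forming $\widetilde{\mathcal S} := (S_0,\ldots,S_{2n-1},\widetilde S_{2n})$. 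The Schur complement of $\widetilde M(n):=M_{\widetilde{\mathcal S}}(n)$ over $M(n-1)$ is now $\mathbf 0$, so $\widetilde M(n)\succeq 0$ with $\Rank\widetilde M(n)=\Rank M(n-1)=np$; that is, $\widetilde M(n)$ is a flat extension of the positive definite $M(n-1)$.

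Next I would produce the finite part of the measure. Because $M(n-1)\succ 0$, the block columns $\widetilde{\mathbf{v}}_0^{(n)},\ldots,\widetilde{\mathbf{v}}_{n-1}^{(n)}$ of $\widetilde M(n)$ are linearly independent and, by flatness, span its whole column space; hence there is a unique relation $\widetilde{\mathbf{v}}_n^{(n)}=\sum_{i=0}^{n-1}\widetilde{\mathbf{v}}_i^{(n)}H_i$, which gives a monic block column relation $H(X)=\mathbf 0$ for $H(x)=x^nI_p-\sum_{i=0}^{n-1}x^iH_i$. By Theorem \ref{th:Hamburger-matricial}, $\widetilde{\mathcal S}$ admits an $(np)$-atomic representing measure $\nu=\sum_{j}\delta_{x_j}A_j$, and by Lemma \ref{lemma:atoms} its support lies in $\cZ(\det H(x))$. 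I would then set $\mu:=\nu+\mathbf{ev}_\infty A$ and verify it is the desired measure: for $k<2n$ the atom at infinity contributes nothing, so $\int x^k\,d\mu=\widetilde S_k=S_k$, while for $k=2n$ one recovers $\int x^{2n}\,d\mu=\widetilde S_{2n}+A=S_{2n}$; the total mass-rank is $np+\Rank A=\Rank M(n)$ and $\mult_\mu\infty=\Rank A=\Rank M(n)-np$, as required.

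It remains to pin down the multiplicity of $t$, and this is the heart of the argument. Writing $H(x)=(x-t)Q(x)+H(t)$ with $Q$ monic of degree $n-1$, the intersection $s$ of Lemma \ref{le:determinant-of-matrix-valued-polynomial-v1} vanishes (the leading coefficient of $Q$ is $I_p$), so $\det H(x)=(x-t)^{\dim\Ker H(t)}g(x)$ with $g(t)\neq 0$; comparing the total degree $np=\deg\det H$ with the mass count $\sum_j\Rank A_j=np$ and using $\Rank A_j\le\dim\Ker H(x_j)$ forces equality atom by atom, whence $\mult_\nu t=\dim\Ker H(t)$. The main obstacle is to show $\dim\Ker H(t)=m$. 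For this I would translate $H(X)=\mathbf 0$ into a relation among the localizing columns $((x-t)\cdot\widetilde{\mathbf{v}})_j^{(n)}$: a vector $v$ lies in $\Ker H(t)$ precisely when the coefficients of $Q(x)v$ assemble a null vector of the block $\bigl(\,((x-t)\cdot\widetilde{\mathbf{v}})_0^{(n)}\ \cdots\ ((x-t)\cdot\widetilde{\mathbf{v}})_{n-1}^{(n)}\,\bigr)$, whose top $np$ rows form $\cH_{x-t}(n-1)$, so that $\dim\Ker H(t)$ is governed by the rank of this localizing matrix. The delicate point, where \eqref{cond:generalized-cond-1} and the positive definiteness of $M(n-1)$ enter, is to evaluate that rank: the column manipulation behind the ``Claim'' in the proof of Theorem \ref{th:mainTheorem}, run one level lower, yields $\Rank\bigl(\cH_{x-t}(n-2)\ \ ((x-t)\cdot\mathbf{v})_{n-1}^{(n-2)}\bigr)=(n-1)p$, so that \eqref{cond:generalized-cond-1} prescribes $\Rank\cH_{x-t}(n-2)=(n-1)p-m$; I would then propagate this through the flat step to the level-$(n-1)$ localizing matrix and conclude $\dim\Ker H(t)=m$. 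I expect the careful rank bookkeeping across these two localizing levels — reconciling the level $n-2$ data in \eqref{cond:generalized-cond-1} with the level $n-1$ relation that defines $H$ — to be the one genuinely technical step.
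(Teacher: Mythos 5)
Your construction of the measure itself is sound and matches the paper's: you peel off $\mathbf{ev}_{\infty}\,\bigl(M(n)\big/M(n-1)\bigr)$ exactly as the paper does (the paper runs the two steps in the opposite order, building the real part $\widetilde\mu$ first and adding the mass at $\infty$ afterwards, but the decomposition is identical), and your degree-versus-mass count $np=\sum_j\Rank A_j\le\sum_j\dim\Ker H(x_j)\le\deg\det H(x)=np$ is a clean way to pin $\mult_\nu t=\dim\Ker H(t)$. The genuine gap is the step you explicitly defer: \emph{propagating} \eqref{cond:generalized-cond-1} ``through the flat step'' to conclude $\dim\Ker H(t)=m$. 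This is not rank bookkeeping that can be completed, and here is why. Your real part $\nu$ is forced: any admissible real part has mass-rank sum $np$ and represents $(S_0,\ldots,S_{2n-1})$, so its moment matrix is a flat extension of the positive definite $M(n-1)$, making $\nu$ the \emph{unique} flat measure determined by $(S_0,\ldots,S_{2n-1})$; writing $\cH_{x-t}(n-1)=V\,\mathrm{diag}\bigl((x_j-t)I_{\Rank A_j}\bigr)V^T$ with $V\in M_{np}(\RR)$ invertible shows $\mult_\nu t=np-\Rank\cH_{x-t}(n-1)$. Both $H$ and $\cH_{x-t}(n-1)$ depend on $S_{2n-1}$ through the corner block $S_{2n-1}-tS_{2n-2}$, whereas hypothesis \eqref{cond:generalized-cond-1} involves only $S_0,\ldots,S_{2n-2}$: it fixes $\Rank\cH_{x-t}(n-2)=(n-1)p-m$ (using the level-$(n-1)$ Claim, as you note) but leaves $\Rank\cH_{x-t}(n-1)$, hence $\mult_\nu t$, completely undetermined. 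No argument from \eqref{cond:generalized-cond-1} alone can reach $\dim\Ker H(t)=m$.

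Indeed the implication you need is false. Take $p=1$, $n=2$, $(S_0,\ldots,S_4)=(2,1,1,1,1)$ (the moments of $\delta_0+\delta_1$), $t=0$, $m=0$. Then $M(1)\succ0$; $\cH_{x}(0)=(1)$ and $(x\cdot\mathbf{v})_1^{(0)}=(S_2)=(1)$, so \eqref{cond:generalized-cond-1} holds with $m=0$; and $M(2)\big/M(1)=0\succeq0$, so \eqref{cond:generalized-cond-2} holds. Yet $L\bigl(x^2(x-1)^2\bigr)=S_4-2S_3+S_2=0$ forces every $(\RR\cup\{\infty\})$--representing measure to be supported in $\{0,1\}$ with no mass at $\infty$, and the moments then force $\mu=\delta_0+\delta_1$, which has $\mult_\mu 0=1\neq m$. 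So under the printed hypotheses the desired measure need not exist: the hypothesis must see $S_{2n-1}$, e.g.\ in the form $m=np-\Rank\cH_{x-t}(n-1)$. For comparison, the paper's own proof elides exactly this point: it invokes ``the same proof as $\eqref{th:mainTheorem-pt2}\Rightarrow\eqref{th:mainTheorem-pt1}$ of Theorem \ref{th:mainTheorem}'' one level down, but that construction chooses the top odd moment \emph{freely}, whereas in Theorem \ref{th:WithAtomInfinity} the moment $S_{2n-1}$ is prescribed and the flat real part is unique. You were right to isolate this step as the crux; as stated, it cannot be closed.
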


\begin{proof}
    By the same proof as for the implication 
    $
    \eqref{th:mainTheorem-pt2}
    \Rightarrow
    \eqref{th:mainTheorem-pt1}
    $
    of
    Theorem \ref{th:mainTheorem},  \eqref{cond:generalized-cond-1} implies that the sequence
    $\cS^{(2n-1)}:=(S_0,S_1,\ldots,S_{2n-1})$
    has a $(\Rank M(n-1))$--atomic $\RR$--representing measure $\widetilde\mu$ such that $\mult_{\widetilde{\mu}}t=m$.
    Let $\widetilde S_{2n}=\int_\RR x^{2n}d\widetilde\mu$
    and
    $\widetilde{\cS}=(S_0,S_1,\ldots,S_{2n-1},\widetilde{S}_{2n})$.
    Since 
    $\Rank M_{\widetilde{\cS}}(n)=
    \Rank M_{\widetilde{\cS}}(n-1)$, it follows that
    $M_{\widetilde{\cS}}(n)\big/M_{\widetilde{\cS}}(n-1)=\mathbf{0}_{p}$. Moreover,
    $$
    M(n)\big/M(n-1)=
    S_{2n}-\widetilde S_{2n}+
    M_{\widetilde{\cS}}(n)\big/M_{\widetilde{\cS}}(n-1).
    $$
    By \eqref{cond:generalized-cond-2},
    $S_{2n}-\widetilde S_{2n}\succeq 0$ and
    thus $\cS=(S_0,S_1,\ldots,S_{2n})$ has a $(\RR\cup \{\infty\})$--representing measure
    $$\mu:=\widetilde{\mu}+\delta_{\infty} \big(M(n)\big/M(n-1)\big).$$
    This concludes the proof of Theorem \ref{th:WithAtomInfinity}.
\end{proof}
	


\begin{thebibliography}{scw69}
		
		
		
		
		
		
		
		
		
		\bibitem[BKRSV20]
		{BKRSV20}
		G.\ Blekherman, M.\ Kummer, C.\ Riener, M.\ Schweighofer, C.\ Vinzant,
		\textit{Generalized eigenvalue methods for Guassian quadrature rules}.
		{Ann.\ H.\ Lebesgue} \textbf{3} (2020), 1327--1341.
		
		\bibitem[BW11]
		{BW11}
		M.\ Bakonyi, H.J.\ Woerdeman: 
		\textit{Matrix Completions, Moments, and Sums of Hermitian Squares}, 
		Princeton University Press, Princeton, 2011

        \bibitem[Bol96]
        {Bol96}
        V.\ Bolotnikov, 
        \textit{On degenerate Hamburger moment problem and extensions
        of nonnegative Hankel block matrices}, 
        Integral Equ.\ Oper.\ Theory
        \textbf{25} (1996), 253--276.

        
		
		
		
		
		\bibitem[CF91]
		{CF91}
		R.\ Curto, L.\ Fialkow,
		\textit{Recursiveness, positivity, and truncated moment problems.}
		{Houston J.\ Math.}
		\textbf{17} (1991), 603--635.
		
		
		
		
		
		
		
		
		
		
		
		
		
		
		
		
		
		
		
		
		

		  \bibitem[DS03]
		  {DS03}
            H.\ Dette, W.J.\ Studden,
            \textit{Quadrature formulas for matrix measures—a geometric approach},
             Linear Algebra Appl.\
            \textbf{364}
            (2003),
            33--64.

        \bibitem[DD02]{DD02}
        A.J.\ Dur\'an, E.\ Defez,
        \textit{Orthogonal matrix polynomials and quadrature formulas},
         Linear Algebra Appl.\
        \textbf{345} 
        (2002),
        71--84.

        \bibitem[DLR96]{DLR96}
        A.J.\ Duran, P. Lopez-Rodriguez,
        \textit{Orthogonal Matrix Polynomials: Zeros and Blumenthal's Theorem},
        J.\ Approx.\ Theory
        \textbf{84} 
        (1996),
        96--118.

        \bibitem[Dym89]{Dym89}
        H.\ Dym, 
        \textit{On Hermitian block Hankel matrices, matrix polynomials, the
        Hamburger moment problem, interpolation and maximum entropy}, 
        Integral Equ.\ Oper.\ Theory \textbf{12} (1989) 757--812.

        \bibitem[DFKM09]
		{DFKMT09}
		Y.M.\ Dyukarev, B.\ Fritzsche, B.\ Kirstein, B., C.\ M\"adler,
        H.C.\ Thiele, \textit{On distinguished solutions of truncated matricial Hamburger
        moment problems}, 
        Complex Anal.\ Oper.\ Theory \textbf{3} (2009) 759--834.
        
		
		
		
		
		
		
		

        \bibitem[FKM24]{FKM24}
        B.\ Fritzsche, B.\ Kirstein, C.\ M\"adler,
        \textit{Description of the set of all possible masses at a fixed point of solutions of a truncated matricial Hamburger moment problem},
         Linear Algebra Appl.\ \textbf{697} (2024),
        639--687.

		
		
		
		
		
		
		
		

		\bibitem[KT22]
		{KT22}
		D.P.\ Kimsey, M.\ Trachana,
		\textit{On a solution of the multidimensional truncated matrix-valued moment problem},
		{Milan J.\ Math.} \textbf{90} (2022), 17--101.
		
		
		
		
		
		
		
		
		\bibitem[MS24a]
		{MS23+}
		C.\ M\"adler, K.\ Schm\"udgen, 
			\textit{On the Truncated Matricial Moment Problem. I},
		    J.\ Math.\ Anal.\ Appl.\ \textbf{540} (2024), 36 pp.
		
		\bibitem[MS24b]
		{MS23++}
		C.\ M\"adler, K.\ Schm\"udgen:
			\textit{On the Truncated Matricial Moment Problem. II}, 
		    {Linear Algebra Appl.} \textbf{649} (2024), 63--97.
		
		
		
		
		
		
		
		
		\bibitem[NZ25]
		{NZ25}
		 R.\ Nailwal, A.\ Zalar,
		 \textit{The truncated univariate rational moment problem}, 
		 Linear Algebra Appl.\ \textbf{708}
		 (2025), 280--301.
		
		\bibitem[NZ+]
		{NZ+} 
		R.\ Nailwal, A.\ Zalar, 
		\textit{Moment theory approach to Gaussian quadratures with prescribed nodes},
		arXiv preprint \url{https://arxiv.org/abs/2412.20849}.
		
		
		
		
		
		
		
		%
		
		
		\bibitem[Sim06]
		{Sim06}
		K.K.\ Simonov, 
		    \textit{Strong truncated matrix moment problem of Hamburger}, 
		    {Sarajevo J.\ Math.} \textbf{2} (2006), 181--204.
		
		
		

            \bibitem[Wol]
{Wol}
	Wolfram Research, Inc., Mathematica, Version 14.0, Wolfram Research, Inc., Champaign,
		IL, 2025.
        
		
		
		
		\bibitem[Zal22]
		{Zal22b}
		A.\ Zalar,
		\textit{The strong truncated Hamburger moment problem with and without gaps}.
		{J.\ Math.\ Anal.\ Appl.} \textbf{516} (2022), 21pp.
		
            \bibitem[ZZ+]
		{ZZ+}
		A.\ Zalar, I.\ Zobovi\v{c},
            \textit{Matricial Gaussian quadrature rules: singular case},
            in preparation.
        
        \bibitem[Zha05]
{Zha05}
	F. Zhang, \textit{The Schur Complement and Its Applications}. 
		Springer-Verlag, New York, 2005.


	\end{thebibliography}
\end{document}